\newcommand{\bR}{\mathbf{R}}
\newcommand{\bC}{\mathbf{C}}
\newcommand{\tensor}{\otimes}
\newcommand{\comp}{\circ}
\newcommand{\vect}{\mathbf{Vect}}
\newcommand{\sB}{\mathcal{B}}
\newcommand{\dif}{\text{Diff}}
\theoremstyle{plain}
\newtheorem{thm}{Theorem}[section]
\newtheorem{lem}[thm]{Lemma}
\theoremstyle{definition}
\newtheorem{defn}[thm]{Definition}
\theoremstyle{remark}
\let\g\gamma
\let\e\varepsilon
\let\l\lambda
\let\m\mu
\let\n\nu
\let\f\varphi
\let\G\Gamma
\let\Si\Sigma
\let\ra\rightarrow
\let\lra\longrightarrow
\let\ti\tilde
\begin{document}
\title{On 2-dimensional topological field theories}
\date{\today}
\author{Florin Dumitrescu}
\maketitle

\begin{abstract}
In this paper we give a characterization of 2-dimensional topological field theories over a space $X$ as Frobenius bundles with connections over $LX$, the free loop space of $X$. This is a generalization of the folk theorem stating that 2-dimensional topological field theories (over a point) are described by finite-dimensional commutative Frobenius algebras. In another direction, this result extends the description of 1-dimensional topological field theories over a space $X$ as vector bundles with connections over $X$, cf. \cite{DST}.
\end{abstract}

\vspace{.1in} 

In \cite{A}, Atiyah introduces the notion of a $d$-dimensional topological quantum field theory. About the same time, Segal \cite{Se1} defines the concept of a 2-dimensional conformal field theory, motivated by the problem of avoiding the difficulties of Feynman path-integration in quantum field theory through an axiomatic approach. In \cite{Se2} he suggests that 2-dimensional conformal field
theories  based on a manifold $X$ should provide geometric 
cocycles for some version of elliptic cohomology of $X$, the home of elliptic genera such as the Witten genus (see \cite{W1}). The idea of relating
field theories and cohomology theories was elaborated
by Stolz-Teichner \cite{ST}, and their collaboration confirms this
relationship  in dimension one: the space of 1-dimensional supersymmetric {\it euclidean}
field theories is a classifying space for K-theory (supersymmetry here avoids some topological triviality).

Understanding field theories of various flavors seems like a very interesting problem lying at the intersection of topology, geometry and quantum field theory. One of the main conjectures by Stolz-Teichner states that the space of 2-dimensional supersymmetric {\it euclidean} field theories is a classifying space for the theory of topological modular forms (see \cite{ST} and \cite{ST2}). In this paper we deal with the simpler case of 2-dimensional  {\it topological } field theories over a space $X$, of which more is known. A folk theorem states that 2-dimensional topological field theories (over $X=\star$) are given by finite-dimensional commutative Frobenius algebras (see for example \cite{Abrams},\cite{Kock} or \cite{MS}). Topological 1-dimensional field theories over a space $X$ are given by finite-dimensional vector bundles with connections over $X$, see \cite{DST}. In this paper we show that 2-dimensional  topological field theories over $X$ are given by Frobenius bundles with connections over $LX$, the free loop space of $X$. A Frobenius bundle over $LX$ is a vector bundle over $LX$ whose restriction to $X$, the space of constant loops in $LX$, is a bundle of Frobenius algebras, and the fiber over an arbitrary loop $\g$ in $X$, based at a point $x\in X$, admits an action of the Frobenius algebra which is the fiber over the constant loop at $x$.

A topological field theory is a functor from a bordism category to an algebraic category, usually the category of topological vector spaces. There are various versions of such topological theories, for example one could modify the bordism category and consider {\it open}, or {\it open-and-closed} bordisms. Two-dimensional such theories were characterized by Moore-Segal \cite{MS} and Lauda-Pfeiffer \cite{LP}. One could also replace the target category by the category of complexes (see Costello \cite{Co1}) or by an arbitrary symmetric monoidal category. Even further, one could replace categories by higher categories and consider {\it extended} topological field theories. Such theories were characterized in the two-dimensional case by Schommer-Pries \cite{Schommer} and in general by Lurie, who outlines in  \cite{Lu}  the proof of the cobordism hypothesis, a conjecture stated by Baez-Dolan \cite{BD}. None of these variations on topological field theories will be considered in this paper.

This paper is organized as follows: in section 1 we define the notion of a 2-dimensional topological field theory  (2-TFT, for short) over a space $X$ (definition \ref{2TFT}) in a manner convenient for our purposes, and the notion of a Frobenius bundle with connection over $LX$ (definition \ref{FB}). We then state the main theorem \ref{main} which establishes the equivalence of the two notions. Section 2 is dedicated to the proof of the theorem. Further consequences of our definition of Frobenius bundles such as Frobenius actions and $\dif(S^1)^+$-actions are relegated to section 3. We also talk about holonomy along closed surfaces and rank-one 2-TFTs which are basically $S^1$-bundle gerbes with connections.  \\

\noindent \textbf{Acknowledgements.} This paper was written while the author was visiting Max-Planck Institute for Mathematics in Bonn. We would like to thank the Institute for support and the friendly, stimulating environment provided. We would also like to give special thanks to Ralph Cohen, Pokman Cheung, Stephan Stolz, Peter Teichner and Konrad Waldorf for helpful conversations and interest in the project.

\section{Definitions and statement of results}

\noindent \textbf{Field theories.} A {\it $d$-dimensional field theory over a space $X$} is generally defined to be a smooth functor of symmetric monoidal categories
\[E: \sB^d(X) \ra \vect.\]
The objects in the category $\sB^d(X)$ are pairs $(Y, \g)$ consisting of a closed oriented $d-1$ manifold $Y$ and a smooth map $\g:Y\ra X$. The morphisms between
two such objects $(Y_1, \g_1)$ and $(Y_2, \g_2)$ are pairs $(Z, \Si)$, with $Z$ an oriented $d$-manifold such that $\partial Z= Y_1 \amalg \bar{Y}_2$, where $\bar{Y}$ denotes $Y$ with opposite orientation, and $\partial \Si= \g_1\amalg \g_2$. The category $\vect$ is the category of  topological vector spaces over a field $k$, which is usually taken to be $\bR$ or $\bC$. ``Monoidal" means disjoint union $\amalg$ and tensor product $\tensor$ in the category $\sB^d(X)$ and $\vect$, respectively. The functor $E$ is compatible with these monoidal structures, that is
\[ E(\g_1\amalg \g_2)= E(\g_1)\tensor E(\g_2), \]
and similarly on morphisms. Moreover, $E$ takes the empty set to our ground field $k$. ``Functoriality" means that glueing of bordisms in $\sB^d(X)$ corresponds to composition of linear maps in $\vect$. A functor is \textit{smooth} if it maps a  smooth family (i.e. parametrized by smooth manifolds) of objects in the source category to a  smooth family of objects in the target category, and similarly for morphisms. 

There are various flavors of field theories according to the geometric structures we require on bordisms: \textit{topological} (no structure), \textit{euclidean} (flat Riemannian metric), \textit{conformal} (conformal structure) etc. The easiest example of a topological field theory (TFT, for short) is for $d=1$, in which case it entails to a parametrization-invariant parallel transport associated to a vector bundle over $X$, which in turn is just a vector bundle with connection over $X$ (see \cite{DST}). 

We will modify slightly the definition above to avoid technical difficulties arising from glueing bordisms in $X$ along common boundaries. One way to deal with this is to consider objects along with collars, and glue along collars. Our approach is to replace the composition of bordisms by decomposition. The price to pay is to give up the beautiful categorical language or modify the definition of category accordingly. In order to avoid set-theoretical issues, we should also require that all the vector spaces appearing in the definition below are subspaces of a {\it fixed} infinite-dimensional topological vector space, let us say $k^\infty$. Since we are dealing  with {\it topological} field theories, it turns out that all the vector spaces appearing below are finite dimensional.

\begin{defn} \label{2TFT} A {\it 2-dimensional topological field theory $E$ over $X$} assigns smoothly to a union of loops $\amalg\g:\amalg S^1\ra X$ in $X$ a topological vector space $E(\amalg\g)=\tensor E(\g)$ and to a surface in $X$, i.e. a map $\Si$ from an oriented surface $Z$ in $X$ a continuous linear map $E(\Si): E(\partial \Si_{in})\ra  E(\partial \Si_{out})$, where the boundary $\partial Z= \partial Z_{in}\amalg \partial Z_{out}$ splits into incoming and outgoing boundary according to whether the orientation of the circles coincides with the induced orientation of the surface or its reverse, so that the properties below hold:

\begin{enumerate}

\item (monoidal structure preserving) As noted above, if $\amalg\g$ is a union of loops in $X$, we require
$$E(\amalg\g)=\tensor E(\g).$$
Moreover, we should ask that $E(\emptyset)=\bC$. Also, if $\Si_1$ and $\Si_2$ are two bordisms in $X$, then we should have 
\[ E(\Si_1\amalg \Si_2)= E(\Si_1)\tensor E(\Si_2). \]

\item (compatibility under decomposition)  If $\Si:Z\ra X$ is decomposed along a (union of) circle(s) $Y_0$ in $Z$ so that $Z=Z_1\amalg_{Y_0} Z_2$, and $\Si_i:=\Si|_{Z_i},\ i=1,2$, we have
\[E(\Si)= E(\Si_2)\comp E(\Si_1)\]
where the left-hand side is a map $E(\partial\Si_{in}) \ra E(\partial\Si_{out})$, and the right-hand side is a composition 
$E(\partial\Si_{in})\ra E(\Si_{|Y_0})\ra E(\partial\Si_{out})$.

\item (invariance under diffeomorphisms)
\[ E(\Si\comp\Phi)= E(\Si), \]
for $\Phi$ an arbitrary diffeomorphism of surfaces that is the identity on the boundary.

\item (identity preserving) Let $\Si_\g$ be the ``constant" bordism over $\g$, i.e. the cylinder over $\g$ (since our theory is topological, the height of the cylinder is irrelevant here). Then 
\[ E(\Si_\g)= 1_{E(\g)}. \]

\end{enumerate}

\end{defn}

The purpose of this article is to understand 2-dimensional topological field theories over a manifold $X$. Let  $LX$ denote the free loop space of the manifold $X$.  Our main result is the following

\begin{thm}  \label{main} There is a 1-1 correspondence:

\[  \left\{
\begin{array}{l}
\text{2-dim topological field theories}\\
\text{\ \ \ \ \ \ \ \ \ \ \ \ \ over $X$}
\end{array} \right\}
\leftrightarrow \left\{
\begin{array}{l}
\text{Frobenius bundles with connections}\\
\text{\ \ \ \ \ \ \ \ \ \ \ \ \ \ \ \ over $LX$}
\end{array} \right\} \]
\end{thm}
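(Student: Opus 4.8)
The plan is to construct maps in both directions and show they are mutually inverse.

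\medskip

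\noindent\textbf{From a 2-TFT to a Frobenius bundle.} Given a 2-TFT $E$ over $X$, I would first produce the underlying vector bundle over $LX$. A point of $LX$ is a loop $\g:S^1\to X$; assigning to it the vector space $E(\g)$ gives a family of vector spaces over $LX$, and smoothness of $E$ (functoriality on smooth families) guarantees this assembles into a smooth vector bundle $\sE\to LX$. The connection on $\sE$ should come from the ``thin" or constant cylinders: a path of loops $\g_s$ sweeps out a cylinder $\Si$ in $X$, and $E(\Si)$ gives parallel transport $E(\g_0)\to E(\g_1)$; property (4) (identity on constant bordisms) forces this transport to depend only on the path of loops and, together with (2) and (3), to be compatible under concatenation and reparametrization, hence to define a genuine connection on $\sE$. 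Restricting $\sE$ to constant loops $X\subset LX$, I would use the pair of pants and the disk: a pair of pants with both legs a constant loop at $x$ and the waist also constant gives a multiplication $E(\g_x)\tensor E(\g_x)\to E(\g_x)$, the cap (disk) gives a unit $\bC\to E(\g_x)$, and the reversed cap gives a trace $E(\g_x)\to\bC$; the relations among these surfaces (associativity, the Frobenius/Cardy relations) follow from (2)--(4) exactly as in the classical $X=\star$ case, yielding a bundle of commutative Frobenius algebras over $X$. Finally, for a loop $\g$ based at $x$, a cylinder with one boundary $\g$ and the other the constant loop at $x$, with an extra constant leg attached, produces the action of the Frobenius algebra $E(\g_x)$ on the fiber $E(\g)$; compatibility of this action with the connection and with the algebra structure is again checked by decomposing surfaces. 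This data is precisely a Frobenius bundle with connection over $LX$ in the sense of Definition \ref{FB}.

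\medskip

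\noindent\textbf{From a Frobenius bundle to a 2-TFT.} Conversely, given such a bundle $(\sE,\nabla)$ over $LX$, I would reconstruct $E$ by defining $E(\g):=\sE_\g$ on objects and, on a surface $\Si:Z\to X$, by choosing a Morse function on $Z$ (relative to the boundary), which decomposes $Z$ into elementary pieces: cylinders (handled by parallel transport $\nabla$), pairs of pants and caps near the critical points (handled by the Frobenius multiplication/comultiplication and unit/counit, transported to the relevant loop via $\nabla$ and the Frobenius action). Composing these elementary linear maps in the order dictated by the Morse function gives a candidate $E(\Si)$. One must then show this is independent of the chosen Morse function and handle-decomposition; this reduces to checking that $E(\Si)$ is invariant under the moves relating two such decompositions (birth/death of critical point pairs and handle slides), which translate into the Frobenius algebra axioms and the flatness/compatibility conditions built into the definition of a Frobenius bundle. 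Properties (1)--(4) of Definition \ref{2TFT} are then essentially immediate from the construction: monoidality from $\amalg\mapsto\tensor$, decomposition from the way composites were defined, diffeomorphism-invariance from the Morse-theoretic independence, and the identity axiom from parallel transport along a constant path being the identity.

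\medskip

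\noindent\textbf{The two constructions are inverse.} Starting from $E$, running the forward construction, then the backward one, one recovers $E$ because the elementary surfaces used to define multiplication, unit, counit and parallel transport in the Frobenius bundle were themselves values of $E$, and every surface decomposes into these by (2); conversely, starting from $(\sE,\nabla)$, the reconstructed TFT has the same underlying bundle (by definition on objects), the same connection (cylinders), and the same Frobenius/action structure (pants, caps, mixed cylinders), so feeding it back through the forward construction returns $(\sE,\nabla)$.

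\medskip

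\noindent\textbf{Main obstacle.} The hard part is the well-definedness of the backward map: showing that the linear map assigned to a surface via a Morse/handle decomposition is genuinely independent of all choices. This is where the full strength of the compatibility axioms in the definition of a Frobenius bundle is needed — in particular the interaction between the connection $\nabla$ (which moves data between fibers over different loops), the fiberwise Frobenius structure over constant loops, and the module action over arbitrary loops. Classically (over a point) this independence is the content of the folk theorem and is proved by reducing to a short list of generating relations; over $LX$ one must carry out the same reduction fiberwise while keeping track of holonomy, so the bookkeeping — rather than any single deep idea — is the principal difficulty.
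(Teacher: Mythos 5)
Your forward direction does not actually produce the data that Definition \ref{FB} asks for, and this creates a genuine gap that propagates into the backward direction. In the paper's definition the fusion and fission maps $\m: A_{\g_1}\tensor A_{\g_2}\to A_{\g_1*\g_2}$ and $\n$ are data attached to \emph{every} concatenated (8-like) pair of loops in $X$, not just to constant loops, and they must satisfy the compatibility axioms (1)--(7) with the parallel transport. Your construction only yields a bundle of commutative Frobenius algebras over the constant loops plus an $A_x$-action on $A_\g$; you never define (or verify the axioms for) fusion/fission at arbitrary 8-like loops, which in the paper are simply $E$ applied to the pair of pants over the 8-like loop, and whose compatibilities occupy most of the forward proof (for instance property (5) needs the limiting argument collapsing a constant neighborhood, and property (4) needs an explicit diffeomorphism of glued pairs of pants).

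The gap becomes fatal in the backward direction. To assign a map to a pair of pants in $X$ whose boundary loops are not constant you need a multiplication $A_{\g_1}\tensor A_{\g_2}\to A_{\g_1*\g_2}$ at the relevant 8-like loop. Your proposal obtains it by ``transporting'' the constant-loop multiplication via $\nabla$ and the Frobenius action, but the connection is only diffeomorphism-invariant, not flat, so the transported product depends on the chosen path of loops and nothing in your data forces independence; moreover the module action only multiplies by elements of $A_x$ and cannot combine two fibers over non-constant loops. The paper avoids this by taking the general fusion/fission maps as primitive data and then proving well-definedness of $E$ on the basic bordisms $B_1$--$B_5$ by hand: independence of the choice of intermediate 8-like loop and of its junction point (using the auxiliary path $a$ and the loops $a a^{-1}$ together with axioms (1), (6), (7)), followed by decomposition-independence for arbitrary bordisms via (co)associativity, the higher (co)associativity of section 1, and the fusion--fission compatibility (4). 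Your Morse-theoretic invariance argument (birth/death of critical points and handle slides) is the right skeleton over a point, but over $X$ the moves must respect the map to $X$ and the non-flat holonomy, and that is exactly the bookkeeping you defer; as written, the backward map is not well defined.
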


A Frobenius bundle with connection on a loop space $LX$ encodes some algebraic data (multiplication and comultiplication maps coming from 8-like loops in $X$) and some geometric data (parallel transport along paths in $LX$- or cylinders in $X$) in a compatible manner. More precisely, we have the following

\begin{defn} \label{FB}
A {\it Frobenius bundle with connection} on $LX$, the free loop space on a manifold $X$, is a vector bundle $A$ over $LX$ 
together with the following data:
\begin{itemize}
\item For a loop $\g$ in $X$, denote by $A_\g$ the fiber of the bundle $A$ at $\g$. If 
$\g=\g_1*\g_2$ is the concatenation of $\g_1$ and $\g_2$, then there are maps:
\[  \m: A_{\g_1}\tensor A_{\g_2} \ra A_\g, \text{ and } \n:A_\g\ra A_{\g_1}\tensor A_{\g_2}\]
called multiplication (fusion), respectively co-multiplication (fission).
\item Each point $x\in X$ determines a constant loop $\g_x$ at $x$. There are  unit and co-unit maps
\[ \eta:k \ra A_{\g_x}, \text{ and } \e:A_{\g_x}\ra k. \]
The counit maps give rise to the nondegeneracy condition: $\e\m$ is nondegenerate at each constant loop $\g_x$ in $X$. 
\item A {\it connection} on the bundle $A$ over $LX$, which assigns smoothly to any path $\G:I=[0,1]\ra LX$  a linear map $A_{\G_0}\ra A_{\G_1}$. This assignment maps a constant path to the identity, is compatible under decomposition of paths and satisfies the following strong invariance property: two paths in $LX$ that describe the same surface in $X$ give rise to the same parallel transport (two paths $\G,\G':I\ra LX$  {\it describe the same surface} if  their  adjoint maps $\check{\G},\check{\G}': I\times S^1\ra X$ are obtained one from the other by precomposition with a diffeomorphism of $I\times S^1$; compare with the definition of a superficial connection in \cite{Wal1}).

\end{itemize}

\noindent These data are subject to the following conditions:
\begin{enumerate}
\item (compatibility of fusion/fission with parallel transport) If $(1,2)$ and $(1',2')$ are two pairs of concatenated loops and $\G$ is a path from $1$ to $1'$ and $\G'$ is a path from $2$ to $2'$ so that for any  $t\in I$, $(\G(t), \G'(t))$  is a pair of concatenated loops, then the following diagrams commute
\[ \xymatrix  @C=3.2pc{ & 1\tensor 2 \ar[r]^-{\m_{12}} \ar[d]_{P(\G)\tensor P(\G')} & 12 \ar[d]^{P(\G\star\G')} &   & 12 \ar[r]^-{\n_{12}} \ar[d]_{P(\G\star\G')} & 1\tensor 2 \ar[d]^{P(\G)\tensor P(\G')} \\
& 1'\tensor 2' \ar[r]_-{\m_{1'2'}} & 1'2' &  &  1'2' \ar[r]_-{\n_{1'2'}} & 1'\tensor 2'  } \]
(We simplify notation and write $i$ instead of $A_i$.)

\item (associativity) Consider the following 3-petal loop (in $X$)
\[ \xy 
(0,0)*\ellipse(4,2){-}; 
(4,0)*\ellipse(4,2){-}; 
(-4,0)*\ellipse(4,2){-}; 
(-14,0)*+{1};
(14,0)*+{3.};
(0,4)*+{2};
\endxy \]
Then the diagram below commutes

\[ \xymatrix  @C=4pc{ 1\tensor 2\tensor 3 \ar[r]^{\m_{12}\tensor 3} \ar[d]_{1\tensor \m_{23}} & 12\tensor 3 \ar[d]^{\m_{(12)3}} \\
1\tensor 23 \ar[r]_{\m_{1(23)}} & 123.} \]

\item (co-associativity) Referring to the picture of the 3-petal loop above, the following diagram commutes
\[ \xymatrix  @C=4pc { 123 \ar[r]^{\n_{(12)3}} \ar[d]_{\n_{1(23)}} & 12\tensor 3 \ar[d]^{\n_{12}\tensor 3} \\
1\tensor 23 \ar[r]_{1\tensor \n_{23}} & 1\tensor 2\tensor 3.} \]

\item (compatibility of fusion and fission) The following diagram commutes
\[ \xymatrix @C=4pc{ 12\tensor 3 \ar[r]^{\m_{(12)3}} \ar[d]_{\n_{1(23)}} & 123 \ar[d]^{\n_{1(23)}} \\
1\tensor 2\tensor 3 \ar[r]_{\m_{1(23)}} & 1\tensor 23} \]
The diagram with the arrows reversed and $\m$'s and $\n$'s interchanged also commutes.

\item (compatibility of (co)units with parallel transport) For points $x, y$ in $X$, and $\g:I\ra X$ a path in $X$ connecting $x$ and $y$, the following diagrams commute
\[ \xymatrix{ & A_{\g_x}  \ar[dr]^{P(x;\g\bar{\g})} & & &  & A_{\g_x}  \ar[dr]^{\e_x} &\\
k \ar[ur]^{\eta_x} \ar[dr]_{\eta_y} &  &  A_{\g\bar{\g}} &  &  A_{\g\bar{\g}} \ar[ur]^{P(\g\bar{\g};x)} \ar[dr]_{P(\g\bar{\g};y)} &  &  k,\\
&  A_{\g_y} \ar[ur]_{P(y;\g\bar{\g})}   &  & &   &  A_{\g_y} \ar[ur]_{\e_y}   &    } \]
where $P(x;\g\bar{\g})$ and $P(y;\g\bar{\g})$ denote parallel transport from the loop at $x$, respectively at $y$, to  the loop $\g\bar{\g}$. 
Similarly, for $P(\g\bar{\g};x)$ and $P(\g\bar{\g};y)$.

\item (compatibility of units and fusion with parallel transport) Let $1$ denote a loop in $X$ based at $x\in X$, and let $\l_x$ denote the constant loop at $x$. The following diagram is commutative
\[ \xymatrix{ & 1\tensor \l_x \ar[dr]^{\m_{1x}} &\\
1 \ar[ur]^{\eta_x} \ar[rr]_{P(1;1\l_x)} & & 1\l_x} \]

\item (compatibility of counits and fission with parallel transport) With the notation of (6), the following diagram commutes
\[ \xymatrix{ & 1\tensor \l_x \ar[dr]^{\e_x} &\\
1\l_x \ar[ur]^{\n_{1x}} \ar[rr]_{P(1\l_x;1)} & & 1.} \]

\end{enumerate}

\end{defn}

An easy consequence of the theorem above is the following

\begin{thm}  There is a 1-1 correspondence:

\[  \left\{
\begin{array}{l}
\text{2-dim homotopical field theories}\\
\text{\ \ \ \ \ \ \ \ \ \ \ \ \ over $X$}
\end{array} \right\}
\leftrightarrow \left\{
\begin{array}{l}
\text{Frobenius bundles with \textup{flat}}\\
\text{\ \  connections over $LX$}
\end{array} \right\} \]
\end{thm}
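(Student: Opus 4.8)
The plan is to deduce this from Theorem \ref{main}: it suffices to check that, under the $1$--$1$ correspondence established there, a $2$-TFT $E$ over $X$ is \emph{homotopical} --- meaning that $E(\Si)$ depends only on the homotopy class rel boundary of the map $\Si\colon Z\to X$ --- precisely when the connection on the associated Frobenius bundle $A$ over $LX$ is \emph{flat}, in the sense that its parallel transport $P(\G)$ depends only on the homotopy class of the path $\G\colon I\to LX$ rel endpoints (equivalently, parallel transport around any null-homotopic loop in $LX$ is the identity). Recall from the proof of Theorem \ref{main} that, for $E$ given, the parallel transport is $P(\G)=E(\check\G)$, where $\check\G\colon I\times S^1\to X$ is the cylinder adjoint to $\G$; conversely, a Frobenius bundle with connection reconstructs $E$ by cutting a surface into elementary pieces (cylinders, pairs of pants and caps) and composing the corresponding structure maps. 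Since a flat connection is in particular a connection, Theorem \ref{main} will restrict to the desired bijection once both implications below are established.

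\textbf{Homotopical $\Rightarrow$ flat.} Suppose $E$ is homotopical and $\G\simeq\G'$ rel endpoints in $LX$. Taking adjoints, the homotopy becomes a homotopy of cylinders $\check\G\simeq\check\G'\colon I\times S^1\to X$ rel boundary, hence $P(\G)=E(\check\G)=E(\check\G')=P(\G')$, so the connection is flat.

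\textbf{Flat $\Rightarrow$ homotopical.} Let $\Si,\Si'\colon Z\to X$ agree on $\partial Z$ and be joined by a homotopy $H\colon Z\times I\to X$ which is constant on $\partial Z\times I$. Fix a Morse function on $Z$ rel boundary; its level circles $C_1,\dots,C_k$ cut $Z$ into elementary bordisms $Z_1,\dots,Z_{k+1}$, so that by property (2) of Definition \ref{2TFT} one has $E(\Si)=E(\Si|_{Z_{k+1}})\comp\cdots\comp E(\Si|_{Z_1})$, and likewise for $\Si'$. Restricting $H$ to $C_j\times I$ gives a path $\G_j$ in $LX$ from $\Si|_{C_j}$ to $\Si'|_{C_j}$, constant whenever $C_j\subset\partial Z$. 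I claim each square
\[
\xymatrix @C=3.2pc{
E(\partial(\Si|_{Z_j})_{in}) \ar[r]^-{E(\Si|_{Z_j})} \ar[d]_{\tensor P(\G)} & E(\partial(\Si|_{Z_j})_{out}) \ar[d]^{\tensor P(\G)} \\
E(\partial(\Si'|_{Z_j})_{in}) \ar[r]_-{E(\Si'|_{Z_j})} & E(\partial(\Si'|_{Z_j})_{out})
}
\]
commutes, where the vertical maps are the (tensor products of) parallel transports along the restrictions of the $\G_j$'s to the boundary circles of $Z_j$. Pasting these squares and using that the outermost vertical maps are parallel transport along constant paths --- hence identities, since $H$ is constant on $\partial Z\times I$ --- yields $E(\Si)=E(\Si')$.

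\textbf{Commutativity of the squares, and the main obstacle.} If $Z_j$ is a cylinder, then $E(\Si|_{Z_j})$ and $E(\Si'|_{Z_j})$ are themselves parallel transports, and $H|_{Z_j\times I}$ exhibits the loop around the square as the boundary of a square in $LX$, hence null-homotopic; so flatness gives commutativity. If $Z_j$ is a pair of pants, $E(\Si|_{Z_j})$ is a fusion or fission map, $H|_{Z_j\times I}$ is a smooth family of concatenated-loop configurations interpolating between the two ends, and commutativity follows from condition (1) of Definition \ref{FB} together with flatness; if $Z_j$ is a cap one argues the same way using the (co)unit compatibilities (5)--(7). The only real work lies in this \textbf{flat $\Rightarrow$ homotopical} direction: one must see that homotopy invariance of $E$ on \emph{all} surfaces is forced by homotopy invariance on cylinders (that is, flatness) together with the purely algebraic compatibility conditions already built into Definition \ref{FB}. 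Making the ladder precise requires the (standard but slightly fiddly, product-type) observation that a homotopy rel boundary of a surface can be cut, compatibly with a fixed Morse decomposition of $Z$, into homotopies of elementary bordisms, and checking that the pieces surrounding critical points are genuinely instances of conditions (1) and (5)--(7). Granting this, the bijection of Theorem \ref{main} restricts to the claimed correspondence.
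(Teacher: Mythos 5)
Your proposal is correct and takes essentially the same route as the paper, which records this theorem as an immediate corollary of Theorem \ref{main} (homotopy invariance of the field theory matching flatness of the connection, a flat connection being in particular a connection) without spelling out any further argument. Your adjunction argument for homotopical $\Rightarrow$ flat and the Morse-decomposition ladder for flat $\Rightarrow$ homotopical, resting on conditions (1) and (5)--(7) of Definition \ref{FB} together with flatness, supply details the paper leaves implicit.
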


\begin{defn}A 2-dimensional field theory $E$ is {\it homotopical} if the definition \ref{2TFT} holds with the condition (3) of invariance under diffeomorphisms replaced by 
\begin{enumerate}
\item[(3')] (invariance under homotopies)
\[ E(\Si')= E(\Si), \]
whenever $\Si$ and $\Si'$ are smoothly homotopic.
\end{enumerate}
\end{defn}
A Frobenius bundle with a {\it flat} connection over $LX$ is a Frobenius bundle over $LX$ with connection so that the parallel transport is invariant under homotopies of paths in $LX$. Note that this is a stronger notion than the previous one, and it implies it. In particular, the invariance of parallel transport along paths in $LX$ that describe the same surface is automatically implied.\\


\noindent {\bf Remarks.} 1. The data of a Frobenius bundle with connection over $LX$ expresses the information contained in a 2-TFT over a space $X$ in a generators-and-relations type theorem.\\

\noindent 2. A Frobenius bundle over $LX$, when restricted to a constant loop at a point $x\in X$, encodes the information of a commutative Frobenius algebra. Thus, a Frobenius bundle over $LX$, when restricted to $X$, the space of constant loops in $LX$, gives rise to a bundle of commutative Frobenius algebras. We will see later (subsection \ref{FA}) that the fiber $A_\g$ over an arbitrary loop $\g$ in $X$ based at $x\in X$ admits an action of the Frobenius algebra $A_x$, the fiber over the constant loop at $x$. \\

\noindent 3. Property (5) in the definition above will allow us to define a field theory for surfaces in $X$ with no incoming or outgoing boundary, in particular the holonomy along closed surfaces in $X$. In fact, property (5) is equivalent to the following more general property 
\begin{enumerate}
\item[(5')] For points $x, y$ in $X$, and $\G:D^2\ra X$ a disk in $X$ containing $x$ and $y$, the following diagrams commute
\[ \xymatrix{ & A_{\g_x}  \ar[dr]^{P(x;\g)} & & &  & A_{\g_x}  \ar[dr]^{\e_x} &\\
k \ar[ur]^{\eta_x} \ar[dr]_{\eta_y} &  &  A_\g &  &  A_\g \ar[ur]^{P(x;\g)} \ar[dr]_{P(y;\g)} &  &  k,\\
&  A_{\g_y} \ar[ur]_{P(y;\g)}   &  & &   &  A_{\g_y} \ar[ur]_{\e_y}   &    } \]
where $\g$ is the restriction of $\G$ to the boundary $S^1=\partial D^2$, $P(x;\g)$ and $P(y;\g)$ denote parallel transport along $\G$ from the loop at $x$, respectively loop at $y$, to  the loop $\g$.
\end{enumerate}
This property immediately implies property (5) and it is obtained from (5) by applying further a parallel transport along paths in the loopspace.\\

\noindent 4. One can modify the definition of a 2-dimensional topological field theory over $X$ by requiring that the theory associates a vector space to any loop in $X$ up to reparametrization, i.e. to any {\it string} in $X$, and to any surface up to diffeomorphism (not necessarily identity on the boundary) in $X$ it associates a linear map between the fibers corresponding to the boundary. In this situation we obtain a similar description of 2-TFTs over $X$ as in theorem \ref{main}, where we replace the definition of a Frobenius bundle with connection over $LX$ by a simpler one in which we drop conditions (6) and (7) in the definition \ref{FB}, and the picture which gives rise to associativity and co-associativity of (2) and (3) is replaced by an ``honest" 3-petal loop, i.e. the three loops share a common point. \\

\noindent\textbf{Higher (co)associativity.} \label{higher} The compatibility of parallel transport with fusion/fission and the (co)associativity in the definition above implies the higher (co)associativity. Consider, for example, the following like-loop (in $X$):
\[ \xy 
(2,0)*\ellipse(4,2){-}; 
(-2,0)*\ellipse(4,2){-}; 
(-6,0)*\ellipse(4,2){-}; 
(6,0)*\ellipse(4,2){-}; 
(-18,0)*+{1};
(4,4)*+{3};
(18,0)*+{4};
(-4,4)*+{2};
\endxy \]

\vspace{.1in}

\noindent There are three ways to break the big loop going around the loops 1, 2, 3 and 4 into the four little loops using the fission maps and the coassociativity of the fission maps. Let us show that this is independent of the possible choices.
For that, let us look at the following diagrams:

\[ \xymatrix{ 1234 \ar[rr] \ar[dd] \ar[dr] & & 1\tensor 234 \ar[dd] \ar[dl]  \\
& 1\tensor 23 \tensor 4 \ar[dr] & \\
123\tensor 4 \ar[rr] \ar[ur] & & 1\tensor 2\tensor 3 \tensor 4} \]

\[ \xymatrix{ 1234 \ar[rr] \ar[dd] \ar[dr] & & 12\tensor 34 \ar[dd] \ar[dl] \\
& 12\tensor 3 \tensor 4 \ar[dr] & \\
123\tensor 4 \ar[rr] \ar[ur] & & 1\tensor 2\tensor 3 \tensor 4} \]

\noindent where the maps are the obvious fission maps combined possibly with canonical reparametrizations of the loops. The inside-the-square diagrams are commutative making the outer squares commutative. This shows independence on the various choices to ``arrive'' from 1234 to the little loops labelled 1 through 4. One can proceed now by induction to show the higher co-associativity. In a similar manner, one deals with the higher associativity of the fusion product. Commutativity of fusion and fission implies the commutativity of the above higher fusion and fission.

\section{Proof of the theorem}
\noindent ``$\rightarrow$" Let us  first show how a 2-TFT $E$ over $X$ gives rise to a Frobenius bundle with connection over $LX$. For a loop $\g$ in $X$, define $A_\g=E(\g)$. If we consider the smooth family of loops in $X$ parametrized by the ``universal loop space" $LX$, the field theory provides a family of vector spaces $\{A_\g\}$ parametrized by $LX$, i.e. a vector bundle $A$ over $LX$. We should endow the bundle $A$ with a multiplication and a comultiplication map and a connection.\\

\noindent {\it (Co)multiplication.} Let  $\g=\g_1\star\g_2$ be an 8-like loop in $X$. Define the comultiplication map
\[ \n: A_\g\ra A_{\g_1}\tensor A_{\g_2}, \ \ \ \n:= E(\Si), \]
where $\Si$ is the map from the pair of pants into $X$ such that at the top end restricts to $\g$ and at the bottom two ends restricts to $\g_1\amalg \g_2$ (see picture).

\[ \xy 
0;/r.30pc/:
(0,4)*{\g_1\star\g_2};
(-12,-12)*{\g_1};
(12,-12)*{\g_2};
(0,0)*\ellipse(3,1){-}; 
(-3,-6)*\ellipse(3,1){.}; 
(3,-6)*\ellipse(3,1){.}; 
(-3,-6)*\ellipse(3,1)__,=:a(-180){-}; 
(3,-6)*\ellipse(3,1)__,=:a(-180){-}; 
(-3,-12)*{}="1"; 
(3,-12)*{}="2"; 
(-9,-12)*{}="A2"; 
(9,-12)*{}="B2"; 
"1";"2" **\crv{(-3,-7) & (3,-7)}; 
(-3,0)*{}="A"; 
(3,0)*{}="B"; 
(-3,-1)*{}="A1"; 
(3,-1)*{}="B1"; 
(0,-8)*{\bullet};
(-1,-1)*{\bullet};
(1,1)*{\bullet};
(-1,-1);(0,-8) **\crv{(-2,-2) & (-1,-8)};
(1,1);(0,-8) **\crv{~*=<3pt>{.}(2,0) & (1,-8)};
"A";"A1" **\dir{-}; 
"B";"B1" **\dir{-}; 
"B2";"B1" **\crv{(8,-7) & (3,-5)}; 
"A2";"A1" **\crv{(-8,-7) & (-3,-5)}; 
(16,-6)*{ \longrightarrow \ \ X.};
(13.5,-4)*{\Si};
\endxy \]

\vspace{.2in}

\noindent Similarly, define the multiplication map
\[ \m: A_{\g_1}\tensor A_{\g_2} \ra A_\g, \ \ \ \m:=E(\bar{\Si}) \]
where $\bar{\Si}$ is the map $\Si$  from the pair of pants with reversed orientation (read down-up) into $X$. \\

\noindent {\it (Co)units.} Let $x\in X$ be an arbitrary point and $\Si_x: D^2\ra X$ be the constant map with value $x$, viewed as a cobordism in $X$ from the empty set to the constant loop at $x$. Define the unit
\[ \eta_x:k\ra A_x, \ \ \ \eta_x:= E(\Si_x). \]
Similarly, define the co-unit 
\[ \e_x:A_x \ra k, \ \ \ \e_x:= E(\bar{\Si}_x). \]
where $\bar{\Si}_x:\bar{D}^2\ra X$ is the map $\Si_x$  viewed as a cobordism in $X$ from the constant loop at $x$  to the empty set. \\

\noindent {\it Connection.} Let now $\G:I\ra LX$ be a path in the loop space between $\g_1$ and $\g_2$. We can interpret $\G$ as a map $\Si:I\times S^1\ra X$, i.e. as a bordism between $\g_1$ and $\g_2$. Define 
\[P(\G): A_{\g_1}\ra A_{\g_2}, \ \ \ P(\G):=E(\Si). \]
The map $P$ defined on the path space of $LX$ satisfies the following properties:

\begin{enumerate}
\item $P(\G_\g)=1_{A_\g}$, where $\G_\g$ is a constant path at $\g\in LX$.

\item (Invariance under diffeomorphisms) $P(\G')=P(\G)$, where $\G':I\ra LX$ is the path in $LX$, whose adjoint map $\check{\G}':I\times S^1\ra X$ is given by $\check{\G}'= \check{\G}\comp \Phi$, where $\Phi:I\times S^1 \ra I\times S^1$ is an  arbitrary diffeomorphism which is the identity on the boundary.

\item (Compatibility under decomposition) $P(\G)= P(\G_2)\comp P(\G_1)$,
if $\G$ decomposes as $\G=\G_2\star\G_1$.
\end{enumerate}
These are exactly the properties that define a connection (see the definition above) on the bundle $A$ over $LX$, i.e. parallel transport along paths in $LX$, or along cylinders in $X$.

The properties (1)-(3) are easy to see. Property (4) expressing the compatibility of fusion and fission is
 a consequence of the following diffeomorphism of surfaces (read up-down and down-up):
\[
\xy 
(-6,16)*{12};
(6,16)*{3};
(-6,-16)*{1};
(6,-16)*{23};
(-9,0)*{123};
(0,0)*\ellipse(3,1){.}; 
(0,0)*\ellipse(3,1)__,=:a(-180){-}; 
(-3,-6)*\ellipse(3,1){.}; 
(3,-6)*\ellipse(3,1){.}; 
(-3,-6)*\ellipse(3,1)__,=:a(-180){-}; 
(3,-6)*\ellipse(3,1)__,=:a(-180){-}; 
(-3,-12)*{}="1"; 
(3,-12)*{}="2"; 
(-9,-12)*{}="A2"; 
(9,-12)*{}="B2"; 
"1";"2" **\crv{(-3,-7) & (3,-7)}; 
(-3,0)*{}="A"; 
(3,0)*{}="B"; 
(-3,-1)*{}="A1"; 
(3,-1)*{}="B1"; 
"A";"A1" **\dir{-}; 
"B";"B1" **\dir{-}; 
"B2";"B1" **\crv{(8,-7) & (3,-5)}; 
"A2";"A1" **\crv{(-8,-7) & (-3,-5)}; 
(-3,6)*\ellipse(3,1){-}; 
(3,6)*\ellipse(3,1){-}; 
(-3,12)*{}="1"; 
(3,12)*{}="2"; 
(-9,12)*{}="A2"; 
(9,12)*{}="B2"; 
"1";"2" **\crv{(-3,7) & (3,7)}; 
(-3,0)*{}="A"; 
(3,0)*{}="B"; 
(-3,1)*{}="A1"; 
(3,1)*{}="B1"; 
"A";"A1" **\dir{-}; 
"B";"B1" **\dir{-}; 
"B2";"B1" **\crv{(8,7) & (3,5)}; 
"A2";"A1" **\crv{(-8,7) & (-3,5)}; 
\endxy
\quad  \cong \quad
 \xy 
 (0,16)*{12};
(18,16)*{3};
(-12,0)*{1};
(1,0)*{2};
(24,0)*{3};
 (-6,-16)*{1};
(12,-16)*{23};
(0,6)*\ellipse(3,1){-}; 
(-3,0)*\ellipse(3,1){.}; 
(-3,0)*\ellipse(3,1)__,=:a(-180){-}; 
(3,0)*\ellipse(3,1){.}; 
(3,0)*\ellipse(3,1)__,=:a(-180){-}; 
(9,0)*\ellipse(3,1){.}; 
(9,0)*\ellipse(3,1)__,=:a(-180){-}; 
(6,-6)*\ellipse(3,1){.}; 
(6,-6)*\ellipse(3,1)__,=:a(-180){-}; (-3,0)*{}="1"; 
(3,0)*{}="2"; 
(-9,0)*{}="A2"; 
(9,0)*{}="B2"; 
"1";"2" **\crv{(-3,4) & (3,4)}; 
(-3,12)*{}="A"; 
(3,12)*{}="B"; 
(-3,10)*{}="A1"; 
(3,10)*{}="B1"; 
"A";"A1" **\dir{-}; 
"B";"B1" **\dir{-}; 
"B2";"B1" **\crv{(6,5) & (3,5)}; 
"A2";"A1" **\crv{(-6,5) & (-3,5)}; 
(-3,-6)*\ellipse(3,1){.}; 
(-3,-6)*\ellipse(3,1)__,=:a(-180){-}; 
(-9,-12)*{}="A3"; 
(-3,-12)*{}="3";
"A2";"A3" **\dir{-}; 
"1";"3" **\dir{-}; 
(9,-9)*{}="C1"; 
(15,-9)*{}="D1"; 
(9,-12)*{}="A4"; 
(15,-12)*{}="4"; 
"A4";"C1" **\dir{-}; 
"4";"D1" **\dir{-}; 
(15,0)*{}="5"; 
(21,0)*{}="6"; 
(15,12)*{}="7"; 
(21,12)*{}="8"; 
(9,6)*\ellipse(3,1){-};
"5";"7" **\dir{-}; 
"6";"8" **\dir{-}; 
"C1";"2" **\crv{(9,-5) & (3,-4)}; 
"D1";"6" **\crv{(15,-5) & (21,-4)}; 
"B2";"5" **\crv{(9,-4) & (15,-4)}; 
\endxy \]

\vspace{.1in}

\noindent These surfaces map in the indicated way to the 3-petal loops. 

Property (5) is a consequence of the following two observations. First, if $\Si:Z\ra X$ is a surface in $X$ that is constant on a neighborhood about a point $z\in Z$, and $Z'$ is obtained from $Z$ by collapsing the neighborhood to a point and $\Si'$ is the induced map, then $E(\Si)=E(\Si')$. This happens because the field theory is smooth, and $\Si'$ is a limit (as $t\ra 0$) of maps $\Si_t$ related by difeomorphisms to $\Si$, so
\[ E(\Si_t)= E(\Si) \lra E(\Si'), \ \ {\text as }\  t\lra 0. \] 
Second, we notice that the bordisms in $X$ in the picture below are diffeomorphic

\[ \xy 
 (-52,24)*{x};
(-40,-12)*{\g\bar{\g}};
(-28,8)*{y};
(-44,0)*{z};
(-20,4)*\ellipse(10,1.5){.}; 
(-20,-4)*\ellipse(10,1.5){.}; 
(-20,4)*\ellipse(10,1.5)__,=:a(-180){-}; 
(-20,-4)*\ellipse(10,1.5)__,=:a(-180){-}; 
(-50,8)*{}="TL"; 
(-30,8)*{}="TR"; 
(-50,-8)*{}="BL"; 
(-30,-8)*{}="BR"; 
(-50,24)*{}="x";
(-40,8)*{}="z";
(-40,16)*{}="c";
"z"+(-1.2,-1.2)="a";
"z"+(1.2,1.2)="b";
"a";"c" **\crv{(-41,7) & (-42,14)}; 
"b";"c" **\crv{~*=<2pt>{.}(-39,9) & (-38,14)}; 
"x"; "BL" **\dir{-}; 
"x"; "TR" **\dir{-}; 
"TR"; "BR" **\dir{-}; 
"a"; (-41.2,-9.2) **\dir{-}; 
"b"; (-38.8,-6.8) **\dir{.};
\endxy
\xy
 (52,24)*{y};
(40,-12)*{\g\bar{\g}};
(28,8)*{x};
(36,0)*{z};
(20,4)*\ellipse(10,1.5){.}; 
(20,-4)*\ellipse(10,1.5){.}; 
(20,4)*\ellipse(10,1.5)__,=:a(-180){-}; 
(20,-4)*\ellipse(10,1.5)__,=:a(-180){-}; 
(50,8)*{}="TL"; 
(30,8)*{}="TR"; 
(50,-8)*{}="BL"; 
(30,-8)*{}="BR"; 
(50,24)*{}="x";
(40,8)*{}="z";
(40,16)*{}="c";
"z"+(-1.2,-1.2)="a";
"z"+(1.2,1.2)="b";
"a";"c" **\crv{(39,7) & (38,14)}; 
"b";"c" **\crv{~*=<2pt>{.}(41,9) & (42,14)}; 
"x"; "BL" **\dir{-}; 
"x"; "TR" **\dir{-}; 
"TR"; "BR" **\dir{-}; 
"a"; (38.8,-9.2) **\dir{-}; 
"b"; (41.2,-6.8) **\dir{.};
\endxy \]
The two bordisms in $X$ drawn above are constant along vertical planes perpendicular to the plane of the paper; for example, along the curves labeled by $z$, the maps are constant equal to $z$, where $z$ is a point on the path $\g$ joining $x$ and $y$. The two bordisms (read up-down) give rise to the two compositions $k\ra  A_{\g\bar{\g}}$ appearing in (5). Read down-up, they give rise to the two compositions $A_{\g\bar{\g}}\ra k$.

Properties (6) and (7) are easy to see, reflecting the diffeomorphism in the  picture below (read up-down and down-up) in the case of a Frobenius algebra (or a 2-TFT over a point)

\[ \xy 
0;/r.30pc/:
(-11,12)*{1};
(46,12)*{1};
(22,4)*{\cong};
(12,12)*{\l_x};
(7,-6)*{1\l_x};
(47,-6)*{1\l_x};
(0,-3)*\ellipse(3,1){.}; 
(0,-3)*\ellipse(3,1)__,=:a(-180){-}; 
(-3,6)*\ellipse(3,1){-}; 
(3,6)*\ellipse(3,1){.}; 
(3,6)*\ellipse(3,1)__,=:a(-180){-}; 
(-3,12)*{}="1"; 
(3,12)*{}="2"; 
(-9,12)*{}="A2";
(9,12)*{}="B2"; 
"1";"2" **\crv{(-3,7) & (3,7)};
"2";"B2" **\crv{(3,18) & (9,18)};
(-3,-6)*{}="A";
(3,-6)*{}="B"; 
(-3,1)*{}="A1";
(3,1)*{}="B1"; 
"A";"A1" **\dir{-}; 
"B";"B1" **\dir{-}; 
"B2";"B1" **\crv{(8,7) & (3,5)}; 
"A2";"A1" **\crv{(-8,7) & (-3,5)}; 
(20,6)*\ellipse(3,1){-}; 
(20,-3)*\ellipse(3,1){.}; 
(20,-3)*\ellipse(3,1)_,=:a(220){-}; 
(37,12)*{}="TL"; 
(43,12)*{}="TR"; 
(37,-6)*{}="BL"; 
(43,-6)*{}="BR"; 
"TL"; "BL" **\dir{-}; 
"TR"; "BR" **\dir{-}; 
\endxy \]

\vspace{.1in}

\noindent This ends one direction in the proof of the theorem.\\ 

\noindent ``$\leftarrow$" Conversely, start with a Frobenius bundle $A$ with connection over $LX$. We would like to produce a 2-TFT over $X$. To each loop $\g$ in $X$ we associate the vector space $E(\g):= A_\g$. The field theory is determined by specifying the linear maps corresponding to bordisms between loops in $X$. Each such bordism in $X$ can be recovered (via gluing) from the following ``basic" bordisms in $X$ (read up-down):

\[ \xy 
(0,5)*\ellipse(3,1){-}; 
(0,-5)*\ellipse(3,1){.}; 
(0,-5)*\ellipse(3,1)__,=:a(-180){-}; 
(-3,10)*{}="TL"; 
(3,10)*{}="TR"; 
(-3,-10)*{}="BL"; 
(3,-10)*{}="BR"; 
"TL"; "BL" **\dir{-}; 
"TR"; "BR" **\dir{-}; 
(16,0)*{ \longrightarrow \ \ X};
(12,3)*{B_1}
\endxy \]

\[ \xy 
(0,-3)*\ellipse(5,2){.}; 
(0,-3)*\ellipse(5,2)__,=:a(-180){-}; 
(-5,-6)*{}="TL"; 
(5,-6)*{}="TR"; 
"TL";"TR" **\crv{(-5,6) & (5,6) }; 
(16,-2)*{ \longrightarrow \ \ X};
(12,1)*{B_2}
\endxy 
\quad \quad \quad \quad
 \xy 
 (0,1)*\ellipse(5,2){-}; 
(-5,2)*{}="TL"; 
(5,2)*{}="TR"; 
"TL";"TR" **\crv{(-5,-10) & (5,-10) }; 
(16,-2)*{ \longrightarrow \ \ X};
(12,1)*{B_3}
\endxy \]

\vspace{.3in}

\[ \xy 
(0,3)*\ellipse(3,1){-}; 
(-3,-6)*\ellipse(3,1){.}; 
(-3,-6)*\ellipse(3,1)__,=:a(-180){-}; 
(3,-6)*\ellipse(3,1){.}; 
(3,-6)*\ellipse(3,1)__,=:a(-180){-}; 
(-3,-12)*{}="1"; 
(3,-12)*{}="2"; 
(-9,-12)*{}="A2"; 
(9,-12)*{}="B2"; 
"1";"2" **\crv{(-3,-7) & (3,-7)}; 
(-3,6)*{}="A"; 
(3,6)*{}="B"; 
(-3,-1)*{}="A1"; 
(3,-1)*{}="B1"; 
"A";"A1" **\dir{-}; 
"B";"B1" **\dir{-}; 
"B2";"B1" **\crv{(8,-7) & (3,-5)}; 
"A2";"A1" **\crv{(-8,-7) & (-3,-5)}; 
(16,-4)*{ \longrightarrow \ \ X};
(12,-1)*{B_5};
(0,-8)*{}="C";
(-7,-8)*{}="D";
\endxy 
\quad \quad \quad \ 
\xy 
(0,-6)*\ellipse(3,1){.}; 
(0,-6)*\ellipse(3,1)__,=:a(-180){-}; 
(-3,3)*\ellipse(3,1){-}; 
(3,3)*\ellipse(3,1){-}; 
(-3,6)*{}="1"; 
(3,6)*{}="2"; 
(-9,6)*{}="A2";
(9,6)*{}="B2"; 
"1";"2" **\crv{(-3,1) & (3,1)};
(-3,-12)*{}="A";
(3,-12)*{}="B"; 
(-3,-5)*{}="A1";
(3,-5)*{}="B1"; 
"A";"A1" **\dir{-}; 
"B";"B1" **\dir{-}; 
"B2";"B1" **\crv{(8,1) & (3,-1)}; 
"A2";"A1" **\crv{(-8,1) & (-3,-1)}; 
(16,-4)*{ \longrightarrow \ \ X};
(12,-1)*{B_4}
\endxy \]

\vspace{.3in}

We shall specify the functor $E$ on such bordisms, and then show that, for an arbitrary bordism, $E$ is independent of the various decompositions of the bordism into basic bordisms. Let $P$ denote the parallel transport map along paths in the loopspace $LX$ determined by the connection on the bundle $A$ over $LX$.\\

\noindent \textbf{$B_1$-type} bordism: let $\g_1$, $\g_2$ denote the top loop, respectively the bottom loop, of the bordism $B_1$. Define 
\[ E(B_1): A_{\g_1}\ra A_{\g_2},\ \ \ E(B_1):= P(\G), \]
where $\G$ is the path in the loop space determined by the cylinder, connecting $\g_1$ and$\g_2$. \\

\noindent \textbf{$B_2$-type} bordism: let $\g$ denote the bottom loop of $B_2$, and let a point on the surface (that does not lie on the boundary)  mapping to $x\in X$. Define
\[ E(B_2): k \ra A_\g, \ \ \ E(B_2):=P(\G_x)\comp \eta_x, \]
where $\eta_x: k\ra A_x$ is the unit of the Frobenius algebra $A_x$ and $\G_x$ is the map from the surface into $X$, viewed as a path in the loopspace $LX$ from the constant path at $x$ to the loop $\g$. This is independent of the various choices since the unit structure maps are compatible with the parallel transport, by the property (5) in the definition of a Frobenius bundle with connection, or its equivalent (5'). \\

\noindent \textbf{$B_3$-type} bordism: let $\g$ denote the top loop of $B_3$, and let a point on the surface as before that maps to $x\in X$. Define
\[ E(B_3): A_\g \ra k, \ \ \ E(B_3):=\e_x\comp P(\G_x), \]
where $\e_x: A_x\ra k$ is the counit of the Frobenius algebra $A_x$ and $\G_x$ is the map from the surface into $X$, viewed as a path in $LX$ from the loop $\g$ to the constant loop at $x$. This is independent of the various choices since the counit structure maps are compatible with the parallel transport by property (5').\\ 

\noindent Before we proceed to define the field theory for the pairs of pants of the type $B_4$ and $B_5$ we describe a special type of interaction between two loops in $X$. Specifically, consider the following picture
\[ \xy 
0;/r.35pc/:
(-4,0);(4,0) **\dir{-}?(.7)*\dir{>}; 
(4,0)*\ellipse(4,2){-};
(-4,0)*\ellipse(4,2){-}; 
(-14,0)*+{1};
(14,0)*+{2};
(1,2)*+{a};
(-4,0)*+{\bullet};
(-3.5,-1)*+{\scriptstyle x}
\endxy \]

\noindent consisting of two loops in $X$ and a path between the loops, labeled respectively $1$, $2$ and $a$. The basepoint of the loop $1$ maps to the point $x\in X$. Denote by $\l_x$ the constant loop at $x$. 
In the diagrams below

\[ \xymatrix{ 1\tensor aa^{-1} \ar[rr]^\m  & & 1aa^{-1}  & &  1\tensor aa^{-1} \ar[dr]^\e \ar[dd]_P & & 1aa^{-1}  \ar[ll] _\n\ar[dd]^P \ar[dl]_P\\
& 1 \ar[ul]_\eta \ar[ur]^P \ar[dl]_\eta \ar[dr]^P & & &  & 1  &\\
1\tensor \l_x \ar[rr]_\m \ar[uu]^P& & 1\l_x \ar[uu]_P  & &   1\tensor \l_x \ar[ur]^\e & & 1\l_x \ar[ll]^\n \ar[ul]_P } \]
we would like to say that the upper triangles commute, i.e. $\m\eta=P$, respectively $\e\n=P$. This is true for the lower triangles, by the compatibility of units and fusion, respectively counits and fission with parallel transport. The left-hand side triangles commute since (co)units are compatible with parallel transport. The right-hand  side triangles also commute since parallel transport is compatible with gluing of paths. The outside squares commute since (co)multiplication is compatible with parallel transport. This gives the required commutativity.

From this we obtain the following diagrams

\[ \xymatrix{ 1\tensor 2 \ar[rr]^{P\tensor 2} \ar[dd]_{1\tensor P} \ar[dr]_\eta & & 1aa^{-1}\tensor 2 \ar[dd]_\m  \\
& 1\tensor aa^{-1} \tensor 2 \ar[ur]_{\m\tensor 2} \ar[dl]^{1\tensor \m} & \\
1\tensor aa^{-1}2 \ar[rr]_\m & & 1aa^{-1}2} \]

\[ \xymatrix{ 1\tensor 2  & & 1aa^{-1}\tensor 2 \ar[ll]_{P\tensor 2} \ar[dl]^{\n\tensor 2} \\
& 1\tensor aa^{-1} \tensor 2 \ar[ul]^\e  & \\
1\tensor aa^{-1}2 \ar[ur]^{1\tensor \n} \ar[uu]^{1\tensor P} & & 1aa^{-1}2 \ar[ll]_\n \ar[uu]_\n }\]
with all the inside n-gons commutative, making the outer square commutative. The commutativity of these diagrams will allow us to say that the field theory is well defined for pairs of pants mapping into the space $X$. \\

\noindent \textbf{$B_4$-type} bordism:
\[ \xy 
0;/r.40pc/:
(-6,9)*{1};
(6,9)*{2};
(0,-15)*{3};
(-11,2)*{1'};
(11,2)*{2'};
(-6,-5)*{1''};
(6,-5)*{2''};
(0,-6)*\ellipse(3,1){.}; 
(0,-6)*\ellipse(3,1)__,=:a(-180){-}; 
(-3,3)*\ellipse(3,1){-}; 
(3,3)*\ellipse(3,1){-}; 
(-1.75,1)*\ellipse(3.5,.8){.}; 
(-1.75,1)*\ellipse(3.5,.8)__,=:a(-180){-}; 
(1.75,1)*\ellipse(3.5,.8){.}; 
(1.75,1)*\ellipse(3.5,.8)__,=:a(-180){-}; 
(-3,6)*{}="1"; 
(3,6)*{}="2"; 
(-9,6)*{}="A2";
(9,6)*{}="B2"; 
"1";"2" **\crv{(-3,1) & (3,1)};
(-3,-12)*{}="A";
(3,-12)*{}="B"; 
(-3,-5)*{}="A1";
(3,-5)*{}="B1"; 
"A";"A1" **\dir{-}; 
"B";"B1" **\dir{-}; 
"B2";"B1" **\crv{(8,1) & (3,-1)}; 
"A2";"A1" **\crv{(-8,1) & (-3,-1)}; 
(16,-6)*{ \longrightarrow \ \ X.};
(13.5,-4)*{B_4};
(0,2)*{}="C";
"C";"B1" **\crv{(0,-1) & (1,-4)}; 
"C";"A1" **\crv{(0,-1) & (-1,-4)}; 
"C";"B1" **\crv{~*=<3pt>{.}(3,2),(3.8,-4)};
"C";"A1" **\crv{~*=<3pt>{.}(-3,2),(-3.8,-4)};
\endxy \]


\noindent Let $\g_1$ and $\g_2$ denote the top two loops and  $\g_3$ the bottom loop. Let $\g_{1'}$ and $\g_{2'}$ be the loops corresponding to the circles labelled $1'$ and $2'$ in the picture. Define
$E(B_4): A_1\tensor A_2\ra A_3$ by
\[ E(B_4):= P(\Si_{(1'2')3})\comp\m_{1'2'} \comp P(\Si_{11'})\tensor P(\Si_{22'}), \]
where $\Si_{(1'2')3}$ is the bordism in $X$ between $\g_{1'}\star\g_{2'}$ and $\g_3$, $\m_{1'2'}$ is the multiplication map determined by $\g_{1'}\star\g_{2'}$, $\Si_{11'}$ is the bordism in $X$ between $\g_{1}$ and $\g_{1'}$ and $\Si_{22'}$ is the bordism in $X$ between $\g_{2}$ and $\g_{2'}$. 

The definition is independent on the choice of the intermediate 8-like loop. Indeed, let us consider first the case when the intermediate 8-like loops share a common point as the loops $1'2'$ and $1''2''$ in the picture above (we shall simplify the notation by writing for example simply 1 for the fiber $A_{1}$ over the loop $\g_1$ and $P(11')$ for the parallel transport along the bordism $\Si_{11'}$ etc., when no possibility of confusion arises). In the diagram below

\[ \xymatrix @C=6.8pc { & 1\tensor 2 \ar[r]^{E(B_4)} \ar[d]^{P(1;1')\tensor P(2;2')}  \ar[ddl]_{P(1;1'')\tensor P(2;2'')} & 3 & \\
& 1'\tensor 2' \ar[r]_\m \ar[dl]^-{P(1';1'')\tensor P(2';2'')} & 1'2' \ar[dr]_{P(1'2';1''2'')} \ar[u]^-{ P(1'2';3)} & \\
1''\tensor 2'' \ar[rrr]^\m & & & 1''2'' \ar[uul]_{P(1''2'';3)} }\]
all the inside diagrams are commuting  expressing either the compatibility of the parallel transport under gluing of paths (of loops) or the compatibility of fusion and parallel transport. This shows that $E(B_4)$ is well defined in this case.

In the situation when the two 8-like loops do not share their junction point (see the picture below), we proceed as follows.

\[ \xy 
0;/r.55pc/:
(-6,8)*{1};
(6,8)*{2};
(0,-14)*{3};
(-10,2)*+{\scriptstyle 1'};
(-1,3.5)*+{a};
(10,2)*+{\scriptstyle 2'};
(-11,4)*+{\scriptstyle 1''};
(5,-5)*+{\scriptstyle 2''};
(0,-6)*\ellipse(3,1){.}; 
(0,-6)*\ellipse(3,1)__,=:a(-180){-}; 
(-3,3)*\ellipse(3,1){-}; 
(3,3)*\ellipse(3,1){-}; 
(-1.75,1)*\ellipse(3.5,.8){.}; 
(-1.75,1)*\ellipse(3.5,.8)__,=:a(-180){-}; 
(1.75,1)*\ellipse(3.5,.8){.}; 
(1.75,1)*\ellipse(3.5,.8)__,=:a(-180){-}; 
(-2.7,2)*\ellipse(3,.6){.}; 
(-2.7,2)*\ellipse(3,.6)__,=:a(-180){-}; 
(-3,6)*{}="1"; 
(3,6)*{}="2"; 
(-9,6)*{}="A2";
(9,6)*{}="B2"; 
"1";"2" **\crv{(-3,1) & (3,1)};
(-3,-12)*{}="A";
(3,-12)*{}="B"; 
(-3,-5)*{}="A1";
(3,-5)*{}="B1"; 
"A";"A1" **\dir{-}; 
"B";"B1" **\dir{-}; 
"B2";"B1" **\crv{(8,1) & (3,-1)}; 
"A2";"A1" **\crv{(-8,1) & (-3,-1)}; 
(16,-6)*{ \longrightarrow \ \ X.};
(14,-4)*{B_4};
(0,2)*{}="C";
(-2.4,4)*{}="D";
"D";"B1" **\crv{(-2.4,3) & (-2.4,-5)}; 
"D";"B1" **\crv{~*=<3pt>{.}(-1,2),(0,-4)};
\endxy \]

\vspace{.1in}

\noindent Let $a$ denote the path (in $X$) between the junction points of the two 8-like loops. Again, in the diagram below
\[ \xymatrix{1\tensor 2 \ar[r]^-{P\tensor P} \ar[d]_{P\tensor P} & 1''\tensor a^{-1}a2' \ar[r]^-P \ar[d]^\m & 1''\tensor 2'' \ar[d]^\m \\
1''aa^{-1}\tensor 2' \ar[r]^\m \ar[d]_P & 1''aa^{-1}2' \ar[r]^-P \ar[d]^P & 1''2'' \ar[d]^P \\
1'\tensor 2' \ar[r]_\m & 1'2' \ar[r]_P & 3} \]
all the inside squares commute. Indeed, the upper left square commutes by the considerations made before defining the field theory for bordisms of type $B_4$. The upper right and the lower left square commute by the compatibility of fusion and parallel transport. Finally, the lower right square commutes since parallel transport is compatible under glueing of paths. Reading off the commutativity of the outer square, we obtain the desired independence.\\




\noindent \textbf{$B_5$-type} bordism:
\[ \xy 
0;/r.40pc/:
(0,9)*{1};
(-6,-15)*{2};
(6,-15)*{3};
(-11,-8)*{2'};
(11,-8)*{3'};
(0,3)*\ellipse(3,1){-}; 
(-3,-6)*\ellipse(3,1){.}; 
(-3,-6)*\ellipse(3,1)__,=:a(-180){-}; 
(3,-6)*\ellipse(3,1){.}; 
(3,-6)*\ellipse(3,1)__,=:a(-180){-}; 
(-1.75,-4)*\ellipse(3.5,.8){.}; 
(-1.75,-4)*\ellipse(3.5,.8)__,=:a(-180){-}; 
(1.75,-4)*\ellipse(3.5,.8){.}; 
(1.75,-4)*\ellipse(3.5,.8)__,=:a(-180){-}; 
(-3,-12)*{}="1"; 
(3,-12)*{}="2"; 
(-9,-12)*{}="A2"; 
(9,-12)*{}="B2"; 
"1";"2" **\crv{(-3,-7) & (3,-7)}; 
(-3,6)*{}="A"; 
(3,6)*{}="B"; 
(-3,-1)*{}="A1"; 
(3,-1)*{}="B1"; 
"A";"A1" **\dir{-}; 
"B";"B1" **\dir{-}; 
"B2";"B1" **\crv{(8,-7) & (3,-5)}; 
"A2";"A1" **\crv{(-8,-7) & (-3,-5)}; 
(16,-3)*{ \longrightarrow \ \ X};
(14,-1)*{B_5};
(0,-8)*{}="C";
(-7,-8)*{}="D";
\endxy \]

\vspace{.1in}

\noindent Let $\g_1$ be the top loop and $\g_2$ and $\g_3$ be the bottom two loops. Let $\g_{2'}$ and $\g_{3'}$ be the loops corresponding to the circles labelled $2'$ and $3'$ in the picture. Define
$E(B_5): A_1\ra A_2\tensor A_3$ by
\[ E(B_5)= P(2';2)\tensor P(3';3)\comp \n_{2'3'} \comp P(1;2'3'), \]
where $P(1;2'3')$ is the parallel transport along the path in $LX$ between $\g_1$ and $\g_{2'}\star\g_{3'}$ determined by (the restriction of) $B_4$ by adjunction, $\n_{2'3'}$ is the comultiplication map determined by $\g_{2'}\star\g_{3'}$, $P(2';2)$ is the parallel transport along the path determined by the restriction of $B_4$ joining $\g_{2'}$ and $\g_2$ and $P(3';3)$ denotes the parallel transport along the path between $\g_{3'}$ and $\g_3$. As for $B_4$-type bordisms, the definition is independent on the choice of the intermediate 8-like loop.
The key ingredient we use here is property (7) in the definition of a Frobenius bundle, expressing the compatibility of counits and fission with parallel transport.\\ 

\noindent For an {\it arbitrary} bordism in $X$, i.e. a map $\Si: Z\ra X$ from a compact oriented smooth surface $Z$ (possibly with boundary) into $X$, we decompose $\Si$ into basic bordisms $B_1$-$B_5$ and for each basic bordism we apply the previous construction. The theory that we obtain is certainly compatible under decomposition of bordisms, by the very construction. The only thing left to check is that our theory is {\it topological}. \\


\noindent {\it Invariance under diffeomorphisms.} The theory $E$ we conctructed is topological if, for any bordism $\Si: Z \ra X$ in $X$ and any diffeomorphism of surfaces $\Phi: Z'\ra Z$ that is the identity on the boundary, we have 
\[ E(\Si\comp\Phi)= E(\Si). \]
This is equivalent to saying that the field theory $E$ is well-defined on $\Si$, i.e. it is independent on how we decompose $\Si$. If $\Si$ is one of the basic bordisms $B_1$-$B_5$, we have already seen that this is the case. For an arbitrary bordism $\Si$, this reduces to 
\begin{itemize}
\item the associativity of fusion and co-assocaitivity of fission expressed by properties (2) and (3) in the definition \ref{FB} of a Frobenius bundle
\item the higher associativity and co-associativity of section 1
\item compatibility of fusion and fission contained in property (4).
\end{itemize}
Let us illustrate this in a couple of examples, which expose the main properties used.\\ 

Consider first a genus two surface $Z$ with one incoming boundary component labelled $a$ and one outgoing boundary component labelled $b$, mapping in two different ways into $X$, as in the picture

\[ \xy 
0;/r.30pc/:
(-10,5)*\ellipse(5,1){-}; 
(-10,-5)*\ellipse(5,1){.}; 
(-10,-5)*\ellipse(5,1)__,=:a(-180){-}; 
(-10,2.5)*\ellipse(2,1){-}; 
(-10,-2.5)*\ellipse(2,1){-}; 
(-25,10)*{}="TL"; 
(-15,10)*{}="TR"; 
(-25,-10)*{}="BL"; 
(-15,-10)*{}="BR"; 
"TL"; "BL" **\dir{-}; 
"TR"; "BR" **\dir{-}; 
(-4,0)*{ \longrightarrow \ \ X};
(-7,3)*{\Si};
(-28,10)*{a};
(-28,-10)*{b};
(-23.2,5)*{x};
(-23.2,-5)*{y}
\endxy 
\xy 
0;/r.30pc/:
(20,5)*\ellipse(5,1){-}; 
(20,-5)*\ellipse(5,1){.}; 
(20,-5)*\ellipse(5,1)__,=:a(-180){-}; 
(20,2.5)*\ellipse(2,1){-}; 
(20,-2.5)*\ellipse(2,1){-}; 
(35,10)*{}="TL"; 
(45,10)*{}="TR"; 
(35,-10)*{}="BL"; 
(45,-10)*{}="BR"; 
"TL"; "BL" **\dir{-}; 
"TR"; "BR" **\dir{-}; 
(56,0)*{ \longrightarrow \ \ X.};
(53,3)*{\Si\comp \Phi};
(32,10)*{a};
(32,-10)*{b};
(36.5,5)*{y};
(36.5,-5)*{x}
\endxy \]

\vspace{.07cm} 

\noindent Here $\Phi$ is a diffeomorphism of the surface $Z$ that interchanges the holes labelled $x$ and $y$ and is constant on the boundary. Each such bordism can be encoded as two different ways in which the bordism $\Si$ can be decomposed, as indicated by the picture

\[ \xy 
0;/r.20pc/:
(0,24)*\ellipse(9,3){-}; 
(-18,0)*\ellipse(9,3){.}; 
(18,0)*\ellipse(9,3){.}; 
(0,0)*\ellipse(9,3){.}; 
(-18,0)*\ellipse(9,3)__,=:a(-180){-}; 
(18,0)*\ellipse(9,3)__,=:a(-180){-}; 
(0,0)*\ellipse(9,3)__,=:a(180){-}; 
(9,0)*{}="1"; 
(27,0)*{}="2"; 
"1";"2" **\crv{(12,15) & (24,15)}; 
(-9,0)*{}="1"; 
(-27,0)*{}="2"; 
"1";"2" **\crv{(-12,15) & (-24,15)}; 
(-45,0)*{}="A2"; 
(45,0)*{}="B2"; 
(-9,48)*{}="A"; 
(9,48)*{}="B"; 
(-9,45)*{}="A1"; 
(9,45)*{}="B1"; 
"A";"A1" **\dir{-}; 
"B";"B1" **\dir{-}; 
"B2";"B1" **\crv{(39,30) & (6,24)}; 
"A2";"A1" **\crv{(-39,30) & (-6,24)}; 
(0,-24)*\ellipse(9,3){.}; 
(0,-24)*\ellipse(9,3)__,=:a(180){-}; 
(9,0)*{}="1"; 
(27,0)*{}="2"; 
"1";"2" **\crv{(12,-15) & (24,-15)}; 
(-9,0)*{}="1"; 
(-27,0)*{}="2"; 
"1";"2" **\crv{(-12,-15) & (-24,-15)}; 
(-45,0)*{}="A2"; 
(45,0)*{}="B2"; 
(-9,-48)*{}="A"; 
(9,-48)*{}="B"; 
(-9,-45)*{}="A1"; 
(9,-45)*{}="B1"; 
"A";"A1" **\dir{-}; 
"B";"B1" **\dir{-}; 
"B2";"B1" **\crv{(39,-30) & (6,-24)}; 
"A2";"A1" **\crv{(-39,-30) & (-6,-24)}; 
(4.5,10)*\ellipse(1.5,20){.}; 
(4.5,10)*\ellipse(1.5,20)__,=:a(-180){-}; 
(-4.5,10)*\ellipse(1.5,20){.}; 
(-4.5,10)*\ellipse(1.5,20)__,=:a(-180){-}; 
(4.5,-10)*\ellipse(1.5,20){.}; 
(4.5,-10)*\ellipse(1.5,20)__,=:a(-180){-}; 
(-4.5,-10)*\ellipse(1.5,20){.}; 
(-4.5,-10)*\ellipse(1.5,20)__,=:a(-180){-}; 
(-9,0)*{}="1";
(9,40)*{}="2";
"1";"2" **\crv{(-7,20),(1,35)};
"1";"2" **\crv{~*=<4pt>{.}(-7,2),(8,20)};
(9,0)*{}="1";
(-9,40)*{}="2";
"1";"2" **\crv{(0,5),(-1,35)};
"1";"2" **\crv{~*=<4pt>{.}(7,2),(-8,20)};
(0,0)*+{3};
(-6,40)*+{1};
(6,40)*+{2};
(-13,20)*+{4};
(13,20)*+{5};
(-27,10)*+{x};
(28,10)*+{y};
(-12,-20)*+{6};
(13,-20)*+{7};
(-12,48)*+{a};
(-12,-48)*+{b};
(63,0)*{ \longrightarrow \ \ X.};
(59,3)*{\Si};
\endxy \]
The two different ways to travel from $a$ to $b$ along the bordism $\Si$ appear in the following diagram as the upper path respectively lower path.
\[ \xymatrix{ & (15) \ar[r] & 15 \ar[r] \ar[dr] & 17 \ar[r] & (43)7 \ar[r] & 437 \ar[ddr] &  & \\
 & & & (43)5 \ar[ur] \ar[dr] &  & & &  \\
a \ar[uur] \ar[ddr] & & & & 435 \ar[uur] \ar[ddr] & & 637 \ar[r] & b.\\
 & & & 4(35) \ar[ur] \ar[dr] &  & & & \\
 & (42) \ar[r] & 42 \ar[r] \ar[ur] & 62 \ar[r] & 6(35) \ar[r] & 635 \ar[uur] &  &  } \]
 All the small n-gons in the diagram are commutative, expressing the compatibility of fusion and fission with parallel transport, or the co-associativity property (3), as in the two ways to reach from $a$ to $435$, or the associativity property (2) appearing in the map $637\ra b$. We conclude that the outer diagram is commutative, the top path giving us the linear map $E(\Si\comp \Phi)$ according to our rules to split a bordism into basic bordisms, and the bottom path giving us $E(\Si)$. \\
 
The second example we would like to consider is of a genus zero surface in $X$ with two incoming boundary components labelled 1 and 2 and two outgoing boundary components labelled 3 and 4.
\[ \xy 
0;/r.30pc/:
(-12,12)*{1};
(12,12)*{2};
(-12,-12)*{3};
(12,-12)*{4};
(0,0)*\ellipse(3,1){.}; 
(0,0)*\ellipse(3,1)__,=:a(-180){-}; 
(-3,-6)*\ellipse(3,1){.}; 
(3,-6)*\ellipse(3,1){.}; 
(-3,-6)*\ellipse(3,1)__,=:a(-180){-}; 
(3,-6)*\ellipse(3,1)__,=:a(-180){-}; 
(-3,-12)*{}="1"; 
(3,-12)*{}="2"; 
(-9,-12)*{}="A2"; 
(9,-12)*{}="B2"; 
"1";"2" **\crv{(-3,-7) & (3,-7)}; 
(-3,0)*{}="A"; 
(3,0)*{}="B"; 
(-3,-1)*{}="A1"; 
(3,-1)*{}="B1"; 
"A";"A1" **\dir{-}; 
"B";"B1" **\dir{-}; 
"B2";"B1" **\crv{(8,-7) & (3,-5)}; 
"A2";"A1" **\crv{(-8,-7) & (-3,-5)}; 
(-3,6)*\ellipse(3,1){-}; 
(3,6)*\ellipse(3,1){-}; 
(-3,12)*{}="1"; 
(3,12)*{}="2"; 
(-9,12)*{}="A2"; 
(9,12)*{}="B2"; 
"1";"2" **\crv{(-3,7) & (3,7)}; 
(-3,0)*{}="A"; 
(3,0)*{}="B"; 
(-3,1)*{}="A1"; 
(3,1)*{}="B1"; 
"A";"A1" **\dir{-}; 
"B";"B1" **\dir{-}; 
"B2";"B1" **\crv{(8,7) & (3,5)}; 
"A2";"A1" **\crv{(-8,7) & (-3,5)}; 
(0,0)*\ellipse(1,8){.};
(0,0)*\ellipse(1,8)^^,=:a(-180){-}; 
(-3,7)*{5};
(20,0)*{ \longrightarrow \ \ X.};
\endxy \]

\noindent There are basically two ways to descend from top to bottom according to our rules of parallel transport along cylinders and fusion and fission maps. One possible way is to fuse the loops $1$ and $2$, use parallel transport and then disperse into the loops $3$ and $4$. Another possible way is to split loop $1$ into the loops   $3$ and the loop labelled $5$ and then fuse the loops 2 and 5 to reach the loop 4. To show that the result does not depend on the possible ways boils down to considering the following picture in $X$
 
\[ \xy 
0;/r.40pc/:
(-2,2)*\ellipse(4,1.6){-}; 
(2,2)*\ellipse(4,1.6){-}; 
(-2,-2)*\ellipse(4,1.6){.}; 
(2,-2)*\ellipse(4,1.6){.}; 
(-2,-2)*\ellipse(4,1.6)__,=:a(-180){-}; 
(2,-2)*\ellipse(4,1.6)__,=:a(-180){-}; 
(0,0)*\ellipse(1,4){.};
(0,0)*\ellipse(1,4)^^,=:a(-180){-}; 
(-8,4)*{}="1"; 
(8,4)*{}="2"; 
(-8,-4)*{}="3"; 
(8,-4)*{}="4"; 
"1";"3" **\crv{(-6,1) & (-6,-1)}; 
"2";"4" **\crv{(6,1) & (6,-1)}; 
(-10,4)*+{1};
(10,4)*+{2};
(-10,-4)*+{3};
(10,-4)*+{4};
(-2.5,0)*+{5};
\endxy \]
and read off the following diagram
\[ \xymatrix @C=4pc { 1\tensor 2 \ar[r]^{\m_{12}} \ar[d]_P & 12 \ar[d]_P \ar[dr]^P & \\
35 \tensor 2 \ar[r]_-{\m_{(35)2}} \ar[d]_{\n_{35}} & (35)2=3(52) \ar[r]_-P \ar[d]_{\n_{3(52)}} & 34 \ar[d]^{\n_{34}} \\
3\tensor 5\tensor 2 \ar[r]_{\m_{52}} & 3\tensor 52 \ar[r]_P & 3\tensor 4.} \]
All the small n-gons are commutative making the exterior n-gon commutative. The key property we use here is the compatibility of fusion and fission, i.e. property (4) in the definition \ref{FB}.\\

Thus, we have also constructed a topological field theory $E$ over $X$ from a Frobenius bundle with connection over $LX$. The two constructions (2-TFTs $\leadsto$ Frobenius bundles,  Frobenius bundles $\leadsto$ 2-TFTs) are clearly inverses of each other. This concludes our proof of the theorem.

\section{Further remarks}
\subsection{Frobenius actions.} \label{FA}
Let $\g$ be a loop in $X$ based at $x\in X$. We will show that $A_\g$ admits an $A_x$ action and coaction. Let us define first a map 
$ \m: A_x\tensor A_\g\ra A_\g,$
as the composition 
\[ \xymatrix  @C=4pc { A_x\tensor A_\g\ar[r]^-{\m_{x\g}} & A_{x\g} \ar[r]^{P(x\g;\g)} & A_\g, } \]
where $P(x\g;\g)$ denotes the obvious parallel transport between  the loop $x\g$ and the loop $\g$.
To see that this map defines an action, we have to check that the diagram
\[ \xymatrix{ A_x\tensor A_x \tensor A_\g \ar[rr]^{1\tensor \m} \ar[d]_{\m_x\tensor 1}& &A_x\tensor A_\g \ar[d]^{1\tensor\m} \\
A_x\tensor A_\g \ar[rr]^\m & & A_\g} \]
is commutative. It suffices to notice that in the diagram below all but the front face are commutative
\[ \xymatrix @C=4pc { & A_x\tensor A_{x\g} \ar[dr]^{1\tensor P(x\g;\g)} \ar'[d][dd]^{\m_{xx\g}}& \\
A_x\tensor A_x \tensor A_\g \ar[ur]^{1\tensor \m_{x\g}} \ar[rr]^{1\tensor \m\ \ \ \ \ \ \ \ } \ar[dd]_{\m_x\tensor 1}& &A_x\tensor A_\g \ar[dd]^{\m_{x\g}} \\
& A_{xx\g} \ar[dr]^{P(xx\g;x\g)} & \\
A_x\tensor A_\g \ar[ur]^{\m_{xx\g}} \ar[rr]^{\m_{x\g}} & & A_{x\g}} \]
making the front face commute as well. Similarly, we define a coaction map $\n: A_\g\ra A_x\tensor A_\g$ as the composition
\[ \xymatrix  @C=4pc { A_\g\ar[r]^{P(\g;x\g)} & A_{x\g} \ar[r]^{\n_{x\g}} & A_x\tensor A_\g, } \]
where $P(\g;x\g)$ denotes the obvious parallel transport between  the loop $\g$ and the loop $x\g$.
To see that this map indeed defines a co-action, we have to check that the diagram
\[ \xymatrix{A_\g\ar[rr]^\n \ar[d]_\n & &A_x\tensor A_\g \ar[d]^{1\tensor\n} \\
A_x\tensor A_\g \ar[rr]^-{\n_x\tensor 1} & & A_x\tensor A_x \tensor A_\g} \]
commutes. This is done as before. Next, we will check that the two actions are compatible in the sense that the following diagram commutes
\[ \xymatrix @C=4pc { A_x\tensor A_\g \ar[r]^-\m \ar[d]_{1\tensor \n} & A_\g \ar[d]^\n \\
A_x\tensor A_x\tensor A_\g \ar[r]_{\m_x\tensor 1} & A_x\tensor A_\g. }\]
First, let us notice that
\begin{eqnarray*}
\n\m &=& \n_{x\g}\comp P(\g;x\g) \comp P(x\g;\g)\comp \m_{x\g}\\
&=& \n_{x\g}\comp\m_{x\g}.
\end{eqnarray*}
Then, consider the following diagram
\[ \xymatrix @C=3pc { A_x\tensor A_\g \ar[rrr]^{\m_{x\g}} \ar[ddd]_{1\tensor\n} \ar[dr]_{1\tensor P(\g;x\g)} & & & A_{x\g} \ar[ddd]^{\n_{x\g}} \\
& A_x\tensor A_{x\g} \ar[r]^{\m_{xx\g}} \ar[ddl]_{1\tensor\n_{x\g}} & A_{xx\g} \ar[ur]_{P(xx\g;x\g)} \ar[d]^{\n_{xx\g}} & \\
& & A_{xx}\tensor A_\g \ar[dr]_{P(xx;x)\tensor 1} & \\
A_x\tensor A_x\tensor A_\g \ar[rrr]_{\m_x \tensor 1} \ar[urr]^{\m_x \tensor 1} & & & A_x\tensor A_\g. } \]
All the inside n-gons are commutative (note that $A_{xx}=A_x$ and $P(xx;x)$ is the identity on $A_x$), and therefore the outer square is commutative. This proves the compatibility of module and comodule structures. We conclude that the fibers admit a Frobenius action of the Frobenius algebra over their basepoints, justifying hopefully our terminology of ``Frobenius bundles''.

\begin{lem} (Frobenius actions are compatible with reparametrizations of loops.) Let $\g:S^1\ra X$ be a loop in $X$ based at $x\in X$, and let the loop $\ti{\g}=\g\comp \f$ based at $y\in X$, for some diffeomorphism $\f$ of $S^1$ preserving its orientation. The following diagrams commute
\[ \xymatrix  @C=3pc{ A_x\tensor A_\g \ar[r]^-\m \ar[d]_{P(x;y)\tensor P(\g;\ti{\g})} & A_\g \ar[d]^{P(\g;\ti{\g})} & &    A_\g \ar[r]^-\n \ar[d]_{P(\g;\ti{\g})} &   A_x\tensor A_\g \ar[d]^{P(x;y)\tensor P(\g;\ti{\g})}\\
A_y\tensor A_{\ti{\g}} \ar[r]_-\m &  A_{\ti{\g}} & &  A_{\ti{\g}} \ar[r]_-\n & A_y\tensor A_{\ti{\g}}} \]
where $P(\g;\ti{\g})$ is the parallel transport along the {\it canonical} path $\G$ between $\g$ and $\ti{\g}$ and $P(x;y)$ is the parallel transport along the paths of constant loops between the loop at $x$ and the loop at $y$, determined by the path $\G$.
\end{lem}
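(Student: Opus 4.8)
The plan is to unwind the definitions of $\m$ and $\n$ for the loop $\g$ (based at $x$) and for the reparametrized loop $\ti\g = \g\comp\f$ (based at $y$), and reduce both squares to the compatibility of fusion/fission with parallel transport, i.e. property (1) of Definition \ref{FB}, together with the compatibility of parallel transport with gluing of paths. Recall that $\m: A_x\tensor A_\g\ra A_\g$ is $P(x\g;\g)\comp\m_{x\g}$ and $\m: A_y\tensor A_{\ti\g}\ra A_{\ti\g}$ is $P(y\ti\g;\ti\g)\comp\m_{y\ti\g}$, where $x\g$ (resp.\ $y\ti\g$) is the concatenation of the constant loop at $x$ (resp.\ $y$) with $\g$ (resp.\ $\ti\g$). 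First I would note that reparametrizing $\g$ by $\f$ induces simultaneously a canonical path from the constant loop $\l_x$ to the constant loop $\l_y$ (the path of constant loops determined by $\G$), a canonical path from $\g$ to $\ti\g$, and hence a canonical path from the concatenation $x\g$ to the concatenation $y\ti\g$ which for each $t$ is a concatenation of the two; this is exactly the setup of property (1).

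The main step is then to paste a cube (for the $\m$-square) exactly as in the verification that the Frobenius action is well-defined: the top face is $\m_{x\g}$ versus $\m_{y\ti\g}$ with the vertical maps being $P(\l_x;\l_y)\tensor P(\g;\ti\g)$ and $P(x\g;y\ti\g)$, which commutes by property (1); the front and back faces are instances of compatibility of $P$ with gluing of paths, expressing that $P(x\g;\g)$ followed by $P(\g;\ti\g)$ equals $P(x\g;y\ti\g)$ followed by $P(y\ti\g;\ti\g)$ (both describe the ``same'' reparametrization-plus-collapse path in $LX$, and in fact one should invoke the strong invariance property of the connection, that paths describing the same surface in $X$ give the same parallel transport); the side faces are the definitions of $\m$ for $\g$ and for $\ti\g$. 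Commutativity of all the auxiliary faces forces the outer square to commute. The $\n$-square is handled identically with all arrows reversed, $\m$'s replaced by $\n$'s, and $\n_{x\g}\comp P(\g;x\g)$ in place of $P(x\g;\g)\comp\m_{x\g}$; it uses property (1) for fission together with the same gluing/strong-invariance facts about $P$.

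The point I expect to require the most care is the identification of the relevant paths in $LX$ as ``the same surface'' so that the strong invariance property of the connection applies: one must check that the composite path $x\g\rightsquigarrow\g\rightsquigarrow\ti\g$ and the composite path $x\g\rightsquigarrow y\ti\g\rightsquigarrow\ti\g$ have adjoint maps $I\times S^1\ra X$ differing by a diffeomorphism of $I\times S^1$ — this is where the hypothesis that $\f$ is orientation-preserving is used, and where the ``canonical'' choice of path attached to a reparametrization must be pinned down. Once that bookkeeping is done, every face of the cube is one of properties (1) of Definition \ref{FB}, the defining formula for $\m$ (or $\n$) on $\g$, or a gluing/invariance identity for $P$, and the conclusion is immediate.
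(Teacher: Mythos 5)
Your argument is essentially the paper's own proof: the paper also establishes the $\m$-square by pasting a diagram through $A_{x\g}$ and $A_{y\ti\g}$ whose inner cells are the definitions of the action, property (1) (compatibility of fusion with parallel transport), and the gluing/invariance identity $P(\g;\ti\g)\comp P(x\g;\g)=P(y\ti\g;\ti\g)\comp P(x\g;y\ti\g)$, and treats the $\n$-square symmetrically. Your extra remark about checking that the two composite paths describe the same surface in $X$ is exactly the point the paper leaves implicit, so the proposal is correct and matches the paper's route.
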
 

\begin{proof} The proof consists in following the definitions. Let us verify that the first diagram commutes (the second diagram is dealt with in a similar fashion). Indeed, consider the diagram
\[ \xymatrix  @C=4pc{ A_x\tensor A_\g \ar[dr]_{\m_{x\g}} \ar[rr]^-\m \ar[ddd]_{P(x;y)\tensor P(\g;\ti{\g})} & & A_\g \ar[ddd]^{P(\g;\ti{\g})} \\
& A_{x\g} \ar[ur]_{P(x\g;\g)} \ar[d]_{P(x\g;y\ti{\g})} & \\
& A_{y\ti{\g}} \ar[dr]^{P(y\ti{\g}; \ti{\g})} & \\
A_y\tensor A_{\ti{\g}} \ar[ur]^{\m_{y\ti{\g}}} \ar[rr]_-\m & &  A_{\ti{\g}}. } \]
All the inside diagrams are commutative, making the outer diagram commutative.
\end{proof}

\subsection{On holonomy}
Start with a Frobenius bundle $A$ with connection over $LX$. Let $Z$ be a closed surface and $\Si:Z\ra X$ be a closed ``surface'' in $X$. Let $a$ and $b$ two distinct points on $Z$ and $x$, respectively $y$, their images via $\Si$. The {\it holonomy} around the closed surface $\Si$ is defined to be the composition
\[ \xymatrix @C=4pc { k \ar[r]^{\eta_x} & A_x \ar[r]^{E(\ti{\Si})}  & A_y \ar[r]^{\e_y}  & k,} \]
where $A_x$, $A_y$ denote the constant loops at the points $x$ and $y$, and $\ti{\Si}$ denotes the surface $\Si$ in $X$  viewed as a bordism between the constant loop at $x$ to the constant loop at $y$. To see that holonomy is well-defined, i.e. it does not depend on our choices, let us remark that it is enough to consider the case of a genus zero surface with one boundary component mapping into $X$  and show that the transport along such surfaces is well-defined. These cases are exactly covered by the $B_2$ and $B_3$-type bordisms in $X$ that appeared in the proof of the theorem, for which we showed independence on the various choices. This makes holonomy well-defined.

\subsection{$\dif(S^1)^+$-action on a Frobenius bundle} Let $A$ be a Frobenius bundle with connection over $LX$. We define an $\dif(S^1)^+$-action on the bundle $A$ as follows. Let $\g$ be a loop in $X$ and $\f$ an element of $\dif(S^1)^+$. Define a map
\[ R_\f: A_\g \ra A_{\g\comp \f}, \ \ \ R_\f:=P(\g;\g\comp\f), \]
where $P(\g;\g\comp\f)$ is the parallel transport along  the {\it canonical} path  $\G:I\ra LX$ in the loopspace $LX$ between $\g$ and $\g\comp\f$ given by 
\[ \G_t= \g\comp\big ((1-t)id_{S^1}+t\f \big ). \]
Here we identify a diffeomorphism of $S^1$ with a diffeomorphism of $I=[0,1]$ preserving the endpoints. If we let the loop $\g$ vary in $LX$, for each diffeomorphism $\f$ of $S^1$, we get a bundle map (still denoted) 
$R_\f:A\ra A$.

\begin{lem}
The maps $R_\f, \ \f\in\dif(S^1)^+$ define a right $\dif(S^1)^+$-action on the Frobenius bundle $A$ over $LX$. 
In other words,
\[ R_\psi\comp R_\f= R_{\f\comp\psi}, \]
for $\f, \psi$ in $\dif(S^1)^+$.
\end{lem}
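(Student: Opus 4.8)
The plan is to express each $R_\f$ as the parallel transport along its defining canonical path in $LX$, to use the compatibility of the connection under concatenation of paths to rewrite $R_\psi\comp R_\f$ as a single parallel transport, and then to invoke the strong invariance property of the connection to identify that path with the one defining $R_{\f\comp\psi}$. So first I would write $R_\f=P(\G^\f_\g)$, where $\G^\f_\g\colon I\ra LX$ denotes the canonical path $t\mapsto\g\comp\big((1-t)\,\mathrm{id}_{S^1}+t\f\big)$ from $\g$ to $\g\comp\f$ (the convex combinations are orientation-preserving diffeomorphisms since $\f$ is). Applying $R_\psi$ to the loop $\g\comp\f$ then yields $P(\G^\psi_{\g\comp\f})$, the parallel transport along the canonical path from $\g\comp\f$ to $(\g\comp\f)\comp\psi=\g\comp(\f\comp\psi)$. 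By the compatibility of parallel transport with decomposition of paths,
\[ R_\psi\comp R_\f \;=\; P(\G^\psi_{\g\comp\f})\comp P(\G^\f_\g)\;=\;P\big(\G^\psi_{\g\comp\f}\star\G^\f_\g\big), \]
so both $R_\psi\comp R_\f$ and $R_{\f\comp\psi}=P(\G^{\f\comp\psi}_\g)$ are parallel transports of paths in $LX$ running from $\g$ to $\g\comp(\f\comp\psi)$.

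It then remains to check that these two paths describe the same surface in $X$, which I would do by directly comparing their adjoint maps $I\times S^1\ra X$. The adjoint of the direct canonical path is $\g\comp\tilde G$ with $\tilde G(t,s)=\big((1-t)\,\mathrm{id}+t(\f\comp\psi)\big)(s)$, while (after reparametrizing the two pieces so that the concatenation is smooth at the gluing parameter) the adjoint of $\G^\psi_{\g\comp\f}\star\G^\f_\g$ is $\g\comp\tilde F$ with $\tilde F(t,s)=f_t(s)$, where $f_t$ is a smooth path of orientation-preserving diffeomorphisms of $S^1$ with $f_0=\mathrm{id}$ and $f_1=\f\comp\psi$ (on the first half $f_t=(1-2t)\,\mathrm{id}+2t\f$, on the second half $f_t=\f\comp\big((2-2t)\,\mathrm{id}+(2t-1)\psi\big)$). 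Setting $\beta_t:=f_t^{-1}\comp\big((1-t)\,\mathrm{id}+t(\f\comp\psi)\big)$, the map $\Phi(t,s):=(t,\beta_t(s))$ is a diffeomorphism of $I\times S^1$ that restricts to the identity on $\{0,1\}\times S^1$ and satisfies $(\g\comp\tilde F)\comp\Phi=\g\comp\tilde G$. Hence the two paths have adjoint maps related by a diffeomorphism of $I\times S^1$, i.e. they describe the same surface in $X$, so the strong invariance property gives $P\big(\G^\psi_{\g\comp\f}\star\G^\f_\g\big)=P(\G^{\f\comp\psi}_\g)$, and therefore $R_\psi\comp R_\f=R_{\f\comp\psi}$. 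Letting $\g$ vary over $LX$ upgrades this to the asserted identity of bundle maps.

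The step I expect to be the main obstacle is the smoothness of $f_t$, and hence of $\Phi$, at the concatenation parameter: the naive concatenation of the two canonical paths is only continuous there, so one must first reparametrize near the break to introduce sitting instants. This changes no parallel transport, by diffeomorphism invariance of the connection, and is already implicit in the decomposition property; once it is in place, the verification that $\Phi$ is a diffeomorphism fixing the boundary and intertwines the two adjoint surfaces is routine.
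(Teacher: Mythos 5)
Your proof is correct, and it takes a genuinely different and more uniform route than the paper's. The paper verifies $P(\g;\g\f\psi)=P(\g\f;\g\f\psi)\comp P(\g;\g\f)$ by a case analysis on how the graphs of $\mathrm{id}$, $\f$ and $\f\comp\psi$ (viewed as diffeomorphisms of $I$) sit relative to one another: in the ordered case it simply asserts that the canonical path from $\mathrm{id}$ to $\f\psi$ ``passes through'' $\f$, in the crossing case it splits the canonical paths at a crossing point, inserts a back-and-forth transport that is declared to cancel, and the general configuration is left to ``a repetitive process''. You instead observe that every map involved is parallel transport along a path of the form $t\mapsto\g\comp f_t$ with $f_0=\mathrm{id}$ and $f_1=\f\comp\psi$, and that any two such paths describe the same surface in $X$ via the explicit fiberwise reparametrization $\Phi(t,s)=(t,f_t^{-1}(g_t(s)))$ of $I\times S^1$, which fixes the boundary; the strong invariance property of the connection then gives the identity in one stroke. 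This single observation subsumes both of the paper's unproved ingredients (the ordered case, and the cancellation $P(A;B)\comp P(B;A)=\mathrm{id}$, which is exactly your $\Phi$ applied to a back-and-forth family of reparametrizations), so your argument is not only shorter but more complete. One small caveat: justifying the sitting-instant smoothing ``by diffeomorphism invariance'' is not literal, since a reparametrization with sitting instants is not a diffeomorphism of $I$; but your own trick closes this gap, because such a reparametrized path is again of the form $t\mapsto\g\comp f_t$ with the same endpoints, hence has the same transport by the very same $\Phi$-argument, after which the decomposition property applies to the now smooth concatenation. With that remark your proof is self-contained and, if anything, more careful than the one in the paper.
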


\begin{proof}
We have to check that for any loop $\g$ in $X$ and any diffeomorphisms $\f, \psi$ in $\dif(S^1)^+$, we have
\[ P(\g;\g\f\psi)= P(\g\f;\g\f\psi)\comp P(\g;\g\f).\]
If $\psi\geq\f\geq id_{S^1}$, then this is indeed the case, since the path from $id_{S^1}$ to $\f\psi$ passes through the intermediate step $\f$. Otherwise we encounter a situation like in the picture below

\[ \xy
0;/r.40pc/:
(0,0);(20,20)**\dir{-};
(10,10)+(-3,3)="o";
(0,0);"o"**\dir{-};
"o";(20,20)**\dir{-};
(0,0);"o" **\crv{(0,2) & (0,11)}; 
"o";(20,20) **\crv{(8,13) & (8,20)}; 
(12,8)*{1};
(-1,6)*{2};
(13,21)*{3};
(2.7,8)*{\scriptstyle a};
(5,7)*{\scriptstyle c};
(13,15)*{\scriptstyle d};
(12,17.6)*{\scriptstyle b};
\endxy \]
where 1 labels the identity on $S^1$, 2 labels the diffeomorphism $\f$ and 3 denotes the composition $\f\psi$. Then 2 is the concatenation of the paths labelled $a$ and $d$, and 3 is the concatenation of the paths labelled $c$ and $b$. We would like to show that 
\[ P(1;3)=P(2;3)\comp P(1;2), \]
where $P(i;j):=P(\g\comp i; \g\comp j)$. We have 
\begin{eqnarray*}
P(2;3)\comp P(1;2) &= & P(cd;ad)\comp P(cb;cd)\comp P(cd;cb)\comp P(1;cd)\\ 
& = & P(cd;ad)\comp P(1;cd) \\
&=& P(1;3).
\end{eqnarray*}
All the other displacements of the diffeomorphisms 1, 2 and 3 are obtained through a repetitive process of the situations described above. This proves the lemma.

\end{proof}

\subsection{Rank-one 2-dimensional topological field theories} 
A rank-one (i.e. the fibers are one-dimensional) TFT over $X$  gives rise to a fusion bundle with superficial connection over $LX$ in the sense of Waldorf. Indeed, we only need to check a couple of things. First, is to give a fusion map that is strictly associative. To see this, consider the paths (constant at the endpoints) labelled 1, 2, and 3 between two points $x$ and $y$ in $X$ as in the picture
\[ \xy
0;/r.15pc/:
(0,19)*{1};
(0,4)*{2};
(0,-11)*{3};
(-10,0)*{}="x";
(10,0)*{}="y";
"x"+(-4,0) *{x};
"y"+(4,0) *{y};
"x"*+{\bullet};
"y"*+{\bullet};
"x"; "y" **\dir{-}; 
"x";"y" **\crv{(-10,20) & (10,20)}; 
"x";"y" **\crv{(-10,-20) & (10,-20)}; 
\endxy \]
Each such picture should provide a {\it fusion} map (in the sense of Waldorf) 
\[ \l_{123}:A_{1\bar{2}}\tensor A_{2\bar{3}} \ra A_{1\bar{3}}, \]
where the bar notation is used for traveling along a path backwards. We define this to be the composition
\[ \l_{123}:= P(1\bar{2}2\bar{3};1\bar{3})\comp\m_{(1\bar{2})(2\bar{3})}. \]
We would have to check the following associativity (refer to the picture below)
\[ \xy
0;/r.15pc/:
(0,19)*{1};
(0,4)*{2};
(0,-11)*{3};
(0,-25)*{4};
(-10,0)*{}="x";
(10,0)*{}="y";
"x"+(-4,0) *{x};
"y"+(4,0) *{y};
"x"*+{\bullet};
"y"*+{\bullet};
"x"; "y" **\dir{-}; 
"x";"y" **\crv{(-10,20) & (10,20)}; 
"x";"y" **\crv{(-10,-20) & (10,-20)}; 
"x";"y" **\crv{(-10,-40) & (10,-40)}; 
\endxy \]

\[ \xymatrix @C=4pc { 1\bar{2} \tensor 2\bar{3} \tensor 3\bar{4} \ar[r]^{\l_{234}} \ar[d]_{\l_{123}} & 1\bar{2}\tensor 2\bar{4} \ar[d]^{\l_{124}} \\
1\bar{3}\tensor 3\bar{4} \ar[r]_{\l_{134}} & 1\bar{4}, } \]
where we dropped the $A$'s when denoting the fibers to simplify notation. Indeed we have
\begin{eqnarray*}
\l_{134}\comp \l_{123} & = & P(1\bar{3}3\bar{4};1\bar{4})\comp \m_{(1\bar{3})(3\bar{4})}\comp P(1\bar{2}2\bar{3};1\bar{3})\comp\m_{(1\bar{2})(2\bar{3})} \\
&= & P(1\bar{3}3\bar{4};1\bar{4})\comp P(1\bar{2}2\bar{3}3\bar{4};1\bar{3}3\bar{4})\comp \m_{(1\bar{2}2\bar{3})3\bar{4}} \comp\m_{(1\bar{2})(2\bar{3})} \\
&= & P(1\bar{2}2\bar{3}3\bar{4};1\bar{4})\comp \m_{(1\bar{2})(2\bar{3})(3\bar{4}).} 
\end{eqnarray*}
The second equality is true since fusion is compatible with parallel transport, and the third equality expresses the associativity of fusion. On the other side, we have
\begin{eqnarray*}
\l_{124}\comp \l_{234} & = & P(1\bar{2}2\bar{4};1\bar{4})\comp \m_{(1\bar{2})(2\bar{4})}\comp P(2\bar{3}3\bar{4};2\bar{4})\comp\m_{(2\bar{3})(3\bar{4})} \\
&= & P(1\bar{2}2\bar{4};1\bar{4})\comp P(1\bar{2}2\bar{3}3\bar{4};1\bar{2}2\bar{4})\comp \m_{(1\bar{2})(2\bar{3}3\bar{4})} \comp\m_{(2\bar{3})(3\bar{4})} \\
&= & P(1\bar{2}2\bar{3}3\bar{4};1\bar{4})\comp \m_{(1\bar{2})(2\bar{3})(3\bar{4}).} 
\end{eqnarray*}

The second thing to check is that out of a connection on a Frobenius bundle over $LX$ we obtain a superficial connection, i.e. a notion of parallel transport $P$ along paths in $LX$  so that any two paths in $LX$ that are rank-two-homotopic give rise to the same parallel transport (two paths $\G$ and $\G'$ are {\it rank-two homotopic} if there is a homotopy $H:I\times I \ra LX$ connecting $\G$ and $\G'$ so that the adjoint map $\check{H}:I\times I\times S^1\ra X$ has rank two, cf. \cite{Wal1}). If $\G'$ is obtained from $\G$ by a precomposition with a diffeomorphism of the cylinder, then $P(\G')=P(\G)$, by the definition of a connection. If this is not the case, they must be related by precomposition with a surjective smooth map $\Phi:I\times S^1\ra I\times S^1$. In this case, the claim is that $P(\G')=E(\check{\G}\comp\Phi)$ can be written as a composition of maps $E(\check{\G}\comp\Phi_i)$, for some diffeomorphisms $\Phi_i$, interspersed with parallel transport maps along paths that retrace loops, along with their inverses. Therefore, diffeomorphic-invariant parallel transport implies rank-two homotopic parallel transport. The argument sketched here is probably more transparent in the one-dimesional case of parallel transport along usual paths in a space.

Now, Waldorf \cite{Wal1} shows that a fusion bundle with superficial connection gives rise to an $S^1$-bundle gerbe with connection over $X$. Conversely, an $S^1$-bundle gerbe with connection over $X$ gives rise to a rank-one 2-TFT over $X$ by the work of Gawedzki, see \cite{Ga} and \cite{GR}. We can conclude that there is an equivalence between rank-one 2-TFTs over $X$  and $S^1$-bundle gerbes with connections over $X$.

\bibliographystyle{plain}
\bibliography{bibliografie}

\bigskip
\raggedright Max-Planck-Institut  f\"{u}r Mathematik\\  53111 Bonn, Germany\\
Email: {\tt florin@mpim-bonn.mpg.de}

\end{document}